\newtheorem{thm}{Theorem}[section]
\newtheorem{mthm}{Theorem}
\newtheorem{crl}[thm]{Corollary}
\newtheorem{prp}[thm]{Proposition}
\newtheorem{lmm}[thm]{Lemma}
\newtheorem{mconj}[mthm]{Conjecture}
\newcommand {\tb}{\textbf}
\newcommand {\mb}{\mathbb}
\newcommand {\Z}{\mb Z}
\newcommand {\R}{\mb R}
\newcommand {\C}{\mb C}
\newcommand {\colim}{\textrm{colim}\ }
\newcommand {\ext}{\mathrm{Ext}}
\newcommand {\lra}{\longrightarrow}
\newcommand {\la}{\langle}
\newcommand {\ra}{\rangle}
\begin{document}

\title{Generalised geometric weak conjecture on spherical classes and non-factorisation of Kervaire invariant one elements}

\author{Hadi Zare
\thanks{I am grateful for a partial support that I have received from the University of Tehran.}
\\
School of Mathematics, Statistics, and Computer Science,\\
College of Science, University of Tehran, Tehran, Iran \textup{14174}\\
\textit{hadi.zare@ut.ac.ir}
}

\maketitle

\date{}

\begin{abstract}
This paper is on the Curtis conjecture which states that the image of the Hurewicz homomorphism $h:{_2\pi_*}Q_0S^0\to H_*(Q_0S^0;\Z/2)$, in positive degrees, only consists of Hopf invariant one and Kervaire invariant one elements. We show that the image of the Hurewicz homomorhism
$h:\pi_*Q_0S^0\to H_*(Q_0S^0;\Z)$, when restricted to product of positive dimensional elements, is determined by $\Z\{h(\eta^2),h(\nu^2),h(\sigma^2)\}$. Localised at $p=2$, this proves a geometric version of a result of Hung and Peterson for the Lannes-Zarati homomorphism where they show that the Lannes-Zarati homomorphism $\varphi_k$ vanishes on decomposable classes for $k>2$; we argue that our result provides a stronger evidence for Curtis conjecture. We apply this to show that, for $p=2$ and $G=O(1)$ or any prime $p$ and $G$ any compact Lie group with Lie algebra $\mathfrak{g}$ so that $\dim\mathfrak{g}>0$, the composition
$${_p\pi_*}Q\Sigma^{n\dim\mathfrak{g}}BG_+^{\wedge n}\to {_p\pi_*}Q_0S^0\stackrel{h}{\to}H_*(Q_0S^0;\Z/p)$$
where $\Sigma^{n\dim\mathfrak{g}}BG_+^{\wedge n}\to S^0$ is the $n$-fold transfer, is trivial if $n>2$. Moreover, we show that for $n=2$, the image of the above composition vanishes on all elements of Adams filtration at least $1$, i.e. those elements of ${_2\pi_*^s}\Sigma^{n\dim\mathfrak{g}}BG_+^{\wedge n}$ represented by a permanent cycle $\ext_{A_p}^{s,t}(\widetilde{H}^*\Sigma^{n\dim\mathfrak{g}}BG_+^{\wedge n},\Z/p)$ with $s>0$, map trivially under the above composition. The case of $n>2$ of the above observation proves and generalises a geometric variant of the weak conjecture on spherical classes due to Hung, later on verified by Hung and Nam. We argue that, for the purpose of verifying Curtis conjecture, our result provides a stronger evidence, and proves more than their result. We also show that, for a compact Lie group $G$, Curtis conjecture holds if we restrict to the image of the $n$-fold transfer $\Sigma^{n\dim\mathfrak{g}}BG_+^{\wedge n}\to S^0$ with $n>1$. Finally, we show that the Kervaire invariant one elements $\theta_j\in{_2\pi_{2^{j+1}-2}^s}$ with $j>3$ do not factorise through the $n$-fold transfer $\Sigma^{n\dim\mathfrak{g}}BG_+^{\wedge n}\to S^0$ with $n>1$ for $G=O(1)$ or any compact Lie group as above, hence reproving and generalising a result of Minami.
\end{abstract}


\section{Introduction and statement of results}
This note is circulated around the Curtis conjecture; some of the observations here might be well known, but we don't know of any published account. For a space $X$, we write $QX=\colim\Omega^i\Sigma^iX$ and $Q_0X$ for its base point component; $Q_0S^0$ is the base point component of $QS^0=\colim \Omega^iS^i$ corresponding to stable maps $S^0\to S^0$ of degree $0$. By Theorem \ref{main0} below, the Hurewicz homomorphism $h:\pi_*QS^0\to H_*(QS^0;\Z)$, as well as its $2$-local version ${_2\pi_*}Q_0S^0\to H_*(Q_0S^0;\Z)$, is not a ring homomorphism. The conjecture then reads as follows (see \cite[Theorem 7.1]{Curtis} and \cite{Wellington} for more discussions).
\begin{mconj}[The Curtis Conjecture]\label{conjecture}
The image of $h:{_2\pi_*}Q_0S^0\lra H_*(Q_0S^0;\Z/2)$, in positive degrees, is the $\Z/2\{h(\eta),h(\nu),h(\sigma),h(\theta_j)\}$, the $\Z/2$-vector space generated by $h(\eta)$, $h(\nu)$, $h(\sigma)$ and $h(\theta_j)$ where $\theta_j\in{_2\pi_{2^{j+1}-2}^s}$ denotes a Kerviare invariant one element.
\end{mconj}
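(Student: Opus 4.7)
The plan is to partition the image of $h$ in positive degrees according to the multiplicative structure of $\pi_*Q_0S^0$, treating decomposable and indecomposable classes by different means. On products of positive-dimensional spherical classes, the paper's main result (summarised in the abstract and referenced as Theorem~\ref{main0}) shows that the image is generated by $h(\eta^2),h(\nu^2),h(\sigma^2)$; since $\eta^2=\theta_1$, $\nu^2=\theta_2$ and $\sigma^2=\theta_3$ are precisely the first three Kervaire invariant one classes, this portion of the image already matches the conjectured form with no further work.

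For indecomposable classes I would invoke the Kahn-Priddy theorem, which factors every $f\in{_2\pi_*^s}S^0$ of positive degree and positive Adams filtration through the transfer $\lambda\colon\Sigma BO(1)_+\to S^0$, and then use the Snaith splitting
$$Q\Sigma BO(1)_+\simeq\prod_{n\geq 1}QD_n\Sigma BO(1)_+$$
to stratify the contributions by $n$. Strata with $n\geq 2$ are governed by the $n$-fold transfer theorem of this paper: the composite with $h$ vanishes for $n>2$ and vanishes on positive Adams filtration for $n=2$, so they carry at most the classes already accounted for by the decomposable analysis. Adams filtration $0$ contributions are handled by Adams' theorem on the Hopf invariant one problem, which identifies them as $\eta,\nu,\sigma$. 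The remaining task is to analyse the positive-Adams-filtration contribution of the $n=1$ stratum and show that the only survivors are the Kervaire invariant one classes $\theta_j$ of filtration $2$, which by definition admit nonzero Hurewicz image.

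The main obstacle is exactly this last step: controlling $h\circ\lambda_*$ on indecomposable classes of Adams filtration $\geq 3$. This is the geometric counterpart of the Lannes-Zarati vanishing conjecture $\varphi_s=0$ for $s\geq 3$; the transfer methods of this paper, being restricted to $n\geq 2$, do not see these classes directly, and the Hung-Peterson algebraic result only delivers vanishing of $\varphi_s$ on decomposables. Closing this gap — most plausibly via a spectral sequence identification of the Snaith filtration with the Adams filtration on ${_2\pi_*}Q_0S^0$, followed by an iterated reduction of the $E_\infty$-terms to Dyer-Lashof products to which Theorem~\ref{main0} applies — is expected to be the principal difficulty, and is precisely the reason Conjecture~\ref{conjecture} remains open beyond the partial results assembled in this paper.
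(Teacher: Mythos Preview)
The statement you are attempting to prove is Conjecture~\ref{conjecture}, the Curtis Conjecture, which the paper explicitly leaves \emph{open}; there is no ``paper's own proof'' to compare against. You yourself acknowledge in your final paragraph that the main step --- controlling $h$ on indecomposable classes of Adams filtration $\geq 3$ --- is missing, and that this is exactly why the conjecture is unresolved. So what you have written is a strategic outline, not a proof, and it cannot be graded as one.

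Beyond that, there are two technical slips in your outline that would derail the strategy even as a programme. First, you conflate the Snaith pieces $D_n\R P = E\Sigma_n\ltimes_{\Sigma_n}\R P^{\wedge n}$ with the smash products $\R P^{\wedge n}$. The paper's $n$-fold transfer results (Theorem~\ref{n-foldtransfer}) concern the transfer $\R P^{\wedge n}_+\to S^0$, which sees only the bottom filtration $D_n^0\R P=\R P^{\wedge n}$ of $D_n\R P$; they say nothing about the higher cells of $D_n\R P$. Indeed, the paper remarks after Theorem~\ref{n-foldtransfer} that resolving the conjecture would require understanding the map $QD_2\R P\to Q_0S^0$, which is strictly more than the $2$-fold transfer handles. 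Second, your claim that ``Adams filtration $0$ contributions are handled by Adams' theorem on the Hopf invariant one problem'' is ambiguous: in positive stems of ${_2\pi_*^s}$ every element has Adams filtration $\geq 1$, so if you mean filtration in $\pi_*^s$ there is nothing to do, while if you mean filtration in $\pi_*^s\R P^{\wedge n}_+$ then Hopf invariant one is not what identifies those classes --- that is precisely the content of the $n=2$, $s=0$ case of Theorem~\ref{n-foldtransfer}, which bounds the image by $\{h(\eta^2),h(\nu^2),h(\sigma^2)\}$ via the decomposability argument, not via Adams' theorem.
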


Of course, after Hill-Hopkins-Ravenel \cite{HillHopkinsRavenel}, the conjecture implies that the image of $h$ is finite. Note that given $f,g\in{_2\pi_*}Q_0S^0$ with $h(f)\neq 0$ and $h(g)=0$ then $h(f+g)=h(f)\neq 0$. Also, note that if $\alpha:S^0\not\lra S^0$ is any map of odd degree, then $h(\alpha f)=h(f)$. These hopefully will justify the way that we have stated our results. Here and throughout the paper, we write $\pi_*$ and $\pi^s_*$ for homotopy and stable homotopy respectively, and ${_p\pi_*},{_p\pi_*^s}$ for their $p$-primary components, respectively. We shall use $f_*$ for the mapping induced in homology, where $f$ is a mapping of spaces or stable complexes. We write $\eta,\nu,\sigma$ for the well known Hopf invariant one elements.  For a (virtual) vector bundle $\xi\to X$, we shall write $X^\xi$ for its Thom (specturm) space. We write $\mathfrak{g}$ for the Lie algebra of a Lie group $G$ and $A_p$ for the mod $p$ Steenrod algebra. The notations $X_+$ refers to a space $X$ with an added base point. We use $X^{\times n}$ and $X^{\wedge n}$ to denote $n$-fold Cartesian and smash products of $X$ with itself, respectively.\\

The source of the Hurwicz homomorphism is computed by the Adams spectral sequence (ASS) $\ext_A(\Z/2,\Z/2)\Rightarrow{_2\pi_*^s}\simeq{_2\pi_*}Q_0S^0$ whereas its target sits inside $H_*(QS^0;\Z/2)\simeq\Z/2[Q^I[1];I\textrm{ admissible}]$ and is related to the Dyer-Lashof algebra. By results of Madsen
\cite[Corollary 3.3]{Madsen} the dual of Dyer-Lashof algebra and Dickson algebra are related. Moreover, the ASS and Dickson algebra are related through the Lannes-Zarati homomorphism $\varphi_k:\ext_A^{k,k+i}(\Z/2,\Z/2)\rightarrow(\Z/2\otimes_A D_k)^*_i$. Since, the Hopf invariant one and Kervaire invariant one elements are detected in the $1$-line and $2$-line of the ASS, respectively, Hu'ng and Peterson \cite[Conjecture 1.2]{HungPeterson} then were led to the following conjecture.
\begin{mconj}[Algebraic conjecture on spherical classes]
$\varphi_k=0$ for all $k>2$.
\end{mconj}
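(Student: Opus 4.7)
The conjecture is a deep open problem, so my plan describes the natural framework for an attack rather than claiming a full proof. The central object is the explicit chain-level description of the Lannes-Zarati map $\varphi_k$ via Singer's algebraic Hurewicz construction and the lambda algebra $\Lambda$.

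First, I would represent classes in $\ext_A^{k,k+i}(\Z/2,\Z/2)$ by admissible monomials $\lambda_{i_1}\cdots\lambda_{i_k}$ in $\Lambda^{k,k+i}$ and use the formula sending such a monomial to the residue of $x_1^{i_1}\cdots x_k^{i_k}$ in the coinvariants $(\Z/2\otimes_A D_k)^*_i$, where $D_k$ is the Dickson algebra on $k$ generators. This reduces the conjecture to a statement about when a $GL_k(\F_2)$-invariant polynomial of degree $i$ is $A$-decomposable, carried out compatibly with the cycle condition in $\Lambda$.

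Second, by the Hu'ng-Peterson theorem recalled in the excerpt, every decomposable Ext class in homological degree $k>2$ maps to zero under $\varphi_k$, so the problem reduces to indecomposables. I would then stratify according to the known infinite families of indecomposables --- the classes $h_j,\, c_j,\, d_j,\, e_j,\, f_j,\, g_j,\, p_j,\, \ldots$ --- and compute $\varphi_k$ on explicit lambda-algebra representatives extracted from the May spectral sequence, checking vanishing in each case.

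The principal obstacle is a uniform argument covering indecomposables not individually identified. The most promising route is the sparseness of the target: the Steenrod action on $D_k$ forces $(\Z/2\otimes_A D_k)^*_i$ to be concentrated in a very restricted set of internal degrees $i$ (roughly, degrees of the form $2^{k+s}-2^s$ or nearby), and for $k>2$ one conjectures these degrees miss all indecomposable positions of $\ext_A^{k,k+*}$ except possibly those already detected by $\varphi_1$ or $\varphi_2$. Turning this heuristic into a theorem in full generality is precisely why the conjecture remains open beyond $k=5$; a solution will likely require either a structural advance in the $A$-module theory of the Dickson algebra --- controlling all hit monomials in arbitrary weight --- or a reformulation of $\varphi_k$ via equivariant or chromatic methods in the spirit of the Hill-Hopkins-Ravenel resolution of the Kervaire invariant problem, which would account geometrically for the special role of $k\leq 2$.
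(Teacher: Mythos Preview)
The paper does not prove this statement: it is recorded as an open conjecture (the Algebraic conjecture on spherical classes, originally due to Hu'ng and Peterson) and the author explicitly remarks that it remains open. There is therefore no proof in the paper to compare your attempt against.

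Your proposal is honest about this --- you describe a framework for an attack rather than a proof --- and the outline you give (lambda-algebra representatives, reduction to indecomposables via the Hu'ng--Peterson vanishing on products, case analysis of known families, and the sparseness heuristic for $(\Z/2\otimes_A D_k)^*$) is a fair summary of the known partial results and the standard line of approach. But as a proof it has a genuine gap, which you yourself name: there is no uniform argument handling all indecomposable classes in $\ext_A^{k,*}$ for $k>2$. Checking $\varphi_k$ on named families $c_j,d_j,e_j,\ldots$ is neither exhaustive nor structurally closed, and the degree-sparseness heuristic is not a theorem. So what you have written is a research programme, not a proof, and should not be presented as one.

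For context, the role of this conjecture in the present paper is purely motivational: the paper proves \emph{geometric} analogues (Theorems~\ref{main0} and~\ref{n-foldtransfer}) concerning the actual Hurewicz homomorphism rather than the Lannes--Zarati map, and argues that these geometric results give evidence for the Curtis conjecture that is logically independent of (and in some sense stronger than) what the algebraic conjecture would yield.
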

Initially, they believed that this implies Conjecture \ref{conjecture}. However, as pointed out to the author by Nick Kuhn, theoretically, it is possible to have the vanishing of the Lannes-Zarati homomorphism on a permanent cycle $c$ converging to an element $f$ with $h(f)\neq 0$ (see also \cite{Hung-erratum}). Hence, the above conjecture does not necessarily imply \ref{conjecture}, although it provides a good evidence. As far as I know, both conjectures remain open to this day. Later on Hu'ng introduced a weak version of the above conjecture \cite{Hung-weakconjecture}. Let $\varphi_k:\mathrm{Ext}^{k,k+i}_A(\Z/2,\Z/2)\to (\Z/2\otimes D_k)_i^*$ be the Lannes-Zarati homomorphism, and $Tr_k:\Z/2\otimes_{GL_k}PH_i(BV_k)\to \mathrm{Ext}^{k,k+i}_A(\Z/2,\Z/2)$ be Singer's algebraic transfer where $V_k=(\Z/2)^{\times k}$ and $P(-)$ is the primitive submodule functor. The conjecture then reads as follows
\cite[Conjecture 1.3]{Hung-weakconjecture}(see also
\cite[Conjecture 1.3]{Hung-algebraictransfer}).
\begin{mconj}[Weak algebraic conjecture on spherical classes]
The composition
$$\varphi_k\circ Tr_k:\Z/2\otimes_{GL_k}PH_i(BV_k)\to (\Z/2\otimes D_k)_i^*$$
is trivial for $k>2$ \cite[Conjecture 1.3]{Hung-algebraictransfer}.
\end{mconj}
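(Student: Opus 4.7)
The plan is to derive the conjecture as an essentially formal consequence of the paper's main geometric theorem, using two classical compatibilities. The first is Singer's comparison: the algebraic transfer $Tr_k$ is the $E_2$-page shadow of the geometric $k$-fold transfer, so every permanent cycle in the image of $Tr_k$ converges, modulo higher Adams filtration, to a class in the image of the stable map $\Sigma^\infty(BV_k)_+\to S^0$. The second is the Lannes--Zarati theorem, according to which $\varphi_k$ realises the Hurewicz homomorphism $h:{_2\pi_*^s}\to H_*(Q_0S^0;\Z/2)$ as an associated-graded construction: if $c\in\ext_A^{k,k+i}(\Z/2,\Z/2)$ is a permanent cycle representing $f\in{_2\pi_i^s}$, then $\varphi_k(c)$ records the component of $h(f)$ in length-$k$ admissible Dyer--Lashof monomials inside the polynomial basis $\Z/2[Q^I[1]:I\text{ admissible}]$.

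First, I would assemble these compatibilities into a comparison diagram relating the algebraic composition $\varphi_k\circ Tr_k$ with the geometric composition
$$
{_2\pi_*^s}(BV_k)_+\lra{_2\pi_*^s}\stackrel{h}{\lra}H_*(Q_0S^0;\Z/2).
$$
Since $(B\Z/2_+)^{\wedge k}=(BV_k)_+$, this geometric composition is exactly the one treated by the paper's main theorem in the special case $G=O(1)$, $\dim\mathfrak{g}=0$, $n=k$.

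Second, I would invoke the main theorem with this choice of data, which for $k>2$ asserts triviality of the displayed geometric composition in positive degrees. Transferring across the compatibilities above, every element of the image of $\varphi_k\circ Tr_k$ produced by a permanent cycle must vanish. To promote this from permanent cycles to all of $\ext$, one bootstraps using the chain-level definitions of both $\varphi_k$ and Singer's $Tr_k$: the $E_\infty$-level vanishing lifts to the $E_2$-page by the algebraicity of both sides of the comparison diagram and the naturality of the Lannes--Zarati machinery with respect to the bar construction.

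The main obstacle will be making the chain-level identification of $\varphi_k\circ Tr_k$ with the associated graded of $h$ on the image of the geometric $k$-fold transfer precise enough to cover non-permanent cycles; the equality on classes hit by permanent cycles is folklore, but extending it to the full domain of $Tr_k$ requires a careful comparison of the two constructions. Once this is in place, the conjecture for $k>2$ follows directly from the main theorem, and the restriction $k>2$ is sharp, since the paper identifies $h(\eta^2)$, $h(\nu^2)$, $h(\sigma^2)$ as the generators of the product part of the Hurewicz image and these classes already live in the image of the $2$-fold transfer.
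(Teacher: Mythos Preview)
The paper does not prove this statement. It is stated as a conjecture for context, and the paper notes that it was established by Hu'ng and Nam \cite{HungNam} via invariant-theoretic and hit-problem methods; the paper's own contribution is the \emph{geometric} analogue, Theorem~\ref{n-foldtransfer}, which it explicitly distinguishes from the algebraic statement. So there is no ``paper's own proof'' to compare against here.

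As for your strategy of deducing the algebraic conjecture from Theorem~\ref{n-foldtransfer}: this has a genuine gap, and it is precisely the gap the paper warns about in the introduction. The Lannes--Zarati comparison you invoke only relates $\varphi_k(c)$ to $h(f)$ when $c$ is a \emph{permanent cycle} converging to $f$; for classes in $\ext_A^{k,*}$ that support differentials there is no homotopy element $f$ at all, so knowing that the geometric composition through ${_2\pi_*^s}$ vanishes tells you nothing about $\varphi_k$ on such classes. There is no reason the image of $Tr_k$ should consist entirely of permanent cycles --- indeed, deciding this is itself a hard problem about the algebraic transfer. Your proposed ``bootstrap'' from $E_\infty$ to $E_2$ via chain-level naturality is not an argument: $\varphi_k$ is an honest $E_2$-level map, not something determined by its values on surviving classes, and the paper explicitly cites Kuhn and Hu'ng's erratum \cite{Hung-erratum} to emphasise that the algebraic and geometric pictures are not interchangeable. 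In short, the obstacle you flag in your last paragraph is not a technicality to be cleaned up but the entire content of the problem, and the paper's geometric theorem does not resolve it.
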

Noting that $BV_k$ is just $\R P^{\times k}$ and that $Tr_k$ is meant to be an algebraic version of the $n$-fold transfer $\R P^{\wedge n}_+\to S^0$, as noted by Hung in \cite[Conjecture 1.3]{Hung-weakconjecture}, the above conjecture aims to imply that the Hurewicz homomorphism $h:{_2\pi_*}Q_0S^0\to H_*(Q_0S^0;\Z/2)$ vanishes on the image of the $n$-fold transfer. Again, as noted above, the above considerations on the Lannes-Zarati homomorphism, does not allow us to make such a deduction. That is, although the Weak algebraic conjecture was later established by Hu'ng and Nam \cite[Main Theorem]{HungNam}, we cannot use it to deduce the above claim on the image of the Hurewicz homomorphism when restricted to the image of the $n$-fold transfer. Let's conclude that most of the work cited above, take place in the realm of algebra, and are heavily related to the invariant theory of the $GL_k$, Dickson algebra, as well the famous hit problem for $BV_k$.\\

We employ geometric methods to prove some facts on the Hurewicz homomorphism, and try to work with the actual homomorphism rather than any algebraic approximation to it. We first consider the Hurewicz homomorphism on decomposable elements in ${\pi_*^s}$. We prove integral results so that $p$-local results are derived in a natural way.

\begin{mthm}\label{main0}
(i) For $i,j>0$, consider the composition
$${\pi_i}Q_0S^0\otimes {\pi_j}Q_0S^0\lra {\pi_{i+j}}Q_0S^0\stackrel{h}{\lra} H_{i+j}(Q_0S^0;\Z)$$
where the first arrow is the product in $\pi_*^s$. Then $h(fg)\neq 0$ only if $f$ and $g$ live in the same grading, and both are detected by the unstable Hopf invariant.\\
(ii) For $i,j>0$, consider the composition
$${_2\pi_i}Q_0S^0\otimes {_2\pi_j}Q_0S^0\lra {_2\pi_{i+j}}Q_0S^0\stackrel{h}{\lra} H_{i+j}(Q_0S^0;\Z/2).$$
Then $h(fg)\neq 0$ only if $f=g$ with $f=\eta,\nu,\sigma$ or odd multiples of these elements, i.e. the image of this composition only consists of Kerviare invariant one elements $h(\eta^2),h(\nu^2),h(\sigma^2)$. Here, the first arrow on the left is the multiplication on the stable homotopy ring. Also, the image of the composite
$$\la f:f=\eta,\nu,\sigma\ra\rightarrowtail{_2\pi_*}Q_0S^0\stackrel{h}{\lra}H_*(Q_0S^0;\Z/2)$$
only consists of the Hurewicz image of the Hopf invariant one elements $\eta,\nu,\sigma$, and the Kervaire invariant one elements $\eta^2,\nu^2,\sigma^2$.\\
(iii) Suppose $E$ is a $CW$-spectrum, and let $\Omega^\infty E=\colim\Omega^nE_n$. Then for the composition
$$h^\circ:\pi_iQ_0S^0\otimes\pi_j\Omega^\infty E\to\pi_{i+j}\Omega^\infty E\lra H_*(\Omega^\infty E;\Z)$$
with $i>0$, we have
$$\begin{array}{ccl}
i<j &\Rightarrow& h^\circ(\alpha\circ f)=0,\\
i>j &\Rightarrow& e_*^{i-j+1}h^\circ(\alpha\circ f)=0.
\end{array}$$
Moreover, if $i=j$ and $h^\circ(\alpha\circ f)\neq 0$ then $\alpha$ is detected by the unstable Hopf invariant; in particular at the prime $2$, $\alpha\in\{\eta,\nu,\sigma\}$.\\
Here, $\circ$ is the product coming from the composition of stable maps $S^{i+j}\to S^j$ and $S^j\to X$ turning $\pi_*\Omega^\infty E$ into a left $\pi_*^s$-module.
\end{mthm}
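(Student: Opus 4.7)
The plan is to prove part (iii) in its stated generality and then deduce (i) and (ii) by specialising to the sphere spectrum $E=\mathbf S$ together with Adams' classification of Hopf invariant one elements. The starting observation is that for $\alpha\in\pi_iQ_0S^0=\pi_i^s$ and $f\in\pi_j\Omega^\infty E$, the composition product $\alpha\circ f$ is represented at the spectrum level by the pairing $S^{i+j}=S^i\wedge S^j\xrightarrow{\alpha\wedge f}\mathbf S\wedge E\to E$. Consequently
$$h^\circ(\alpha\circ f)=\mu_*\bigl(h(\alpha)\otimes h(f)\bigr),$$
where $\mu\colon\Omega^\infty\mathbf S\wedge\Omega^\infty E\to\Omega^\infty E$ is the natural action pairing, so the entire question reduces to understanding $\mu_*$ on integral homology.

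Next, I would apply Snaith's stable splitting $\Sigma^\infty\Omega^\infty\mathbf S\simeq\bigvee_{k\geq 1}\Sigma^\infty D_kS^0$, with $D_kS^0=(B\Sigma_k)_+$, to decompose $h(\alpha)$ into James-Hopf components $h(\alpha^{(k)})$: the class $\alpha^{(1)}$ recovers the stable class itself, while $\alpha^{(2)}$ encodes the unstable Hopf invariant of $\alpha$. The central technical step is a summand-wise identification, showing that for each $k\geq 1$ the composite
$$\Sigma^\infty D_kS^0\wedge\Omega^\infty E\to\Omega^\infty\mathbf S\wedge\Omega^\infty E\xrightarrow{\mu}\Omega^\infty E$$
lands in homology in the image of an iterated degree-lowering operator whose iteration count is controlled by $k$. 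This is essentially a transcription of the Cohen--Lada--May Dyer--Lashof structure into the composition-product setting, and it is the source of the operator $e_*$ in the theorem.

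Once this structural lemma is in hand, the three degree regimes of (iii) follow by a bookkeeping argument. When $i<j$, every surviving James-Hopf summand produces a class in too low a degree and contributes zero. When $i>j$, the image is trapped in the kernel of $e_*^{i-j+1}$. When $i=j$, only the $k=2$ summand can contribute, forcing $\alpha$ to be detected by the unstable Hopf invariant. For (ii), at $p=2$ this restricts $\alpha$ to $\{\eta,\nu,\sigma\}$ up to odd multiples by Adams' theorem, and the cross terms $h(\eta\nu),h(\eta\sigma),h(\nu\sigma)$ are then ruled out by a direct Adams-filtration check; for (i), combining (ii) with the parallel odd-primary analysis and the nonexistence of Hopf invariant one at odd primes yields the integral assertion.

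The principal obstacle I anticipate is the structural lemma itself: making the James-Hopf decomposition of $\mu$ explicit enough to isolate $e_*$ and to pin down its iteration count. This will demand a careful separation of the two products on $H_*(Q_0S^0)$ — the Pontryagin product from loop-sum versus the composition product from the ring structure — together with precise degree control supplied by the Snaith projections.
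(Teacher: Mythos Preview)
Your approach via Snaith splitting and the James--Hopf decomposition is genuinely different from the paper's, and considerably more elaborate than what is needed. The paper's argument never touches the $\circ$-product on $H_*(QS^0)$, the Dyer--Lashof algebra, or any stable splitting. It works directly with the adjoint $S^{i+j}\to QS^j\to \Omega^\infty E$ and uses only Freudenthal's suspension theorem together with James' description of $H_*(\Omega S^{n+1};\Z)$. When $i<j$ the map $S^{i+j}\to QS^j$ lies in the stable range and hence factors through a genuine map $S^{i+j}\to S^j$, killing the Hurewicz image for dimensional reasons. When $i>j$ one takes the $(i-j+1)$-fold adjoint to land in the stable range, and this \emph{is} the operator $e_*^{i-j+1}$: it is nothing more than the iterated homology suspension, not something extracted from a Cohen--Lada--May analysis. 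For $i=j=n$ the adjoint factors through $\Omega S^{n+1}$, whose integral homology is the tensor algebra $T(g_n)$; a nonzero Hurewicz image in degree $2n$ must then be a nonzero multiple of $g_n^2$, which is classically equivalent to detection by the unstable Hopf invariant in the mapping cone. Part (ii) then follows from Adams' theorem, and that is the whole proof.

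Your route could in principle be completed---the identity $h^\circ(\alpha\circ f)=\mu_*(h(\alpha)\otimes h(f))$ is correct, and the composition product on $H_*(QS^0)$ has been analysed by Madsen and others---but the ``structural lemma'' you single out as the main obstacle is carrying the entire argument, and you have not stated what it actually says about $e_*$ or how the Snaith filtration index governs the iteration count. By contrast, each case in the paper is two or three lines. Two minor points on your deductions: the cross terms $h(\eta\nu)$, $h(\eta\sigma)$, $h(\nu\sigma)$ already vanish by the $i\neq j$ case, so no Adams-filtration check is required; and the paper proves (i) directly over $\Z$ rather than by assembling $p$-local pieces, so the appeal to ``nonexistence of Hopf invariant one at odd primes'' is unnecessary for (i) as stated.
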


Let's note part (ii) of the above theorem, might seem as an analogue of \cite[Proposition 5.4]{HungPeterson} for the Lannes-Zarati homomorphism which shows that $\varphi_k$ vanishes on decomposable classes for $k>2$. However, again, due to the above considerations on $\varphi_k$, for the purpose of studying the Hurewicz homomorphism, our result provides a stronger evidence for Conjecture \ref{conjecture}. Our first application of this theorem is to provide some necessary conditions for an element $f\in{_2\pi_*^s}$ to map nontrivially under $h$ which we state as follows.

\begin{mthm}\label{necessary}
Suppose $f\in{_p\pi_*^s}$ is of Adams filtration $>2$ which maps nontrivially under $h:{_p\pi_*}Q_0S^0\to H_*(Q_0S^0;\Z/p)$. Then, $f$ is not a decomposable element in ${_2\pi_*^s}$.
\end{mthm}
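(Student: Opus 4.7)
The plan is to derive a contradiction from the assumption that $f$ is decomposable, by using Theorem \ref{main0}(ii) to pin down the Hurewicz image and then ruling out the remaining possibilities on Adams-filtration grounds.

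Suppose for contradiction that $f=\sum_i g_i h_i$ with $g_i,h_i\in{_2\pi_{>0}^s}$. Applying Theorem \ref{main0}(ii) termwise, together with the $\Z/2$-linearity of $h$, one obtains
$$h(f)=\sum_i h(g_ih_i)\in\Z/2\{h(\eta^2),h(\nu^2),h(\sigma^2)\}.$$
Since the three generators $h(\eta^2)$, $h(\nu^2)$, $h(\sigma^2)$ sit in the pairwise distinct degrees $2,6,14$, and $h(f)\neq 0$ is concentrated in a single degree, $h(f)$ must coincide with one of them; in particular $\deg f\in\{2,6,14\}$.

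It then remains to inspect these three stems. At $p=2$ one has ${_2\pi_2^s}=\Z/2\{\eta^2\}$, ${_2\pi_6^s}=\Z/2\{\nu^2\}$, and ${_2\pi_{14}^s}=\Z/2\{\sigma^2\}\oplus\Z/2\{\kappa\}$. In the first two stems the only nonzero class is $\eta^2$, respectively $\nu^2$, each of Adams filtration $2$, so no class of Adams filtration $>2$ exists at all. In ${_2\pi_{14}^s}$ the class $\kappa$ is indecomposable while $\sigma^2$ has Adams filtration $2$; moreover no other product of positive-dimensional stable classes in this total degree contributes beyond $\sigma^2$, so the decomposable subgroup is $\Z/2\{\sigma^2\}$. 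In each case $f$ has to coincide with $\eta^2$, $\nu^2$, or $\sigma^2$, and therefore has Adams filtration exactly $2$, contradicting the hypothesis.

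The only non-formal ingredient is the analysis of ${_2\pi_{14}^s}$: one must verify, using Toda's computations of the stable stems, that none of the other candidate products (such as $\nu\cdot\zeta_{11}$, $\nu^2\cdot\varepsilon$ or $\nu^2\cdot\bar\nu$) can produce $\kappa$ or $\sigma^2+\kappa$, and that $\kappa$ is genuinely indecomposable in ${_2\pi_*^s}$. Once these classical facts are in place, the theorem follows formally from Theorem \ref{main0}(ii); this is the step I expect to require the most care, though it is not deep.
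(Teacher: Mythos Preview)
Your argument is correct and follows the same overall strategy as the paper: invoke Theorem~\ref{main0}(ii) to force $h(f)\in\Z/2\{h(\eta^2),h(\nu^2),h(\sigma^2)\}$, hence $\deg f\in\{2,6,14\}$, and then inspect those three stems by hand.

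The only difference is in how the case $\deg f=14$ is closed. You argue that the decomposable part of ${_2\pi_{14}^s}$ is exactly $\Z/2\{\sigma^2\}$, so a decomposable $f$ there must equal $\sigma^2$ and hence have Adams filtration $2$. The paper instead observes that the only class of Adams filtration $>2$ in dimensions $2,6,14$ is $\kappa$, and then quotes the fact that $h(\kappa)=0$ (with a reference to \cite[Lemma 3.7]{Z-extensions}). Both endings are valid and rely on a single classical input about $\kappa$; yours needs the indecomposability of $\kappa$ in ${_2\pi_*^s}$ (which, as you note, has to be checked against Toda's tables), while the paper's needs $h(\kappa)=0$. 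The paper's route is marginally cleaner in that it avoids scanning all candidate products in degree $14$, but the two are otherwise interchangeable.
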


The proof of the above theorem is very short, so we include it here.

\begin{proof}
First, note that the elements of Adams filtration $1$ or $2$ in ${_p\pi_*^s}$ are known (see for example the tables of \cite{Toda} and \cite{Ravenel-Greenbook}). 
Suppose $f$ is a sum of decomposable terms. Since $f$ is Adams filtration at least $3$, then if written as a sum of decomposable terms in ${_p\pi_*^s}$, it cannot involve terms such as $\eta^2,\nu^2$ or $\sigma^2$. It is known that the only element, in one of the dimensions $2$, $6$, or $14$ is one of higher filtration, namely $\kappa\in{_2\pi_{14}^s}$, which is known to map trivially under $h$ (see also \cite[Lemma 3.7]{Z-extensions} for a proof). Since $h$ is a homomorphism of graded modules, hence, by Theorem \ref{main0}, $h(f)=0$.
\end{proof}

We are now in a position to state the first application of the above analysis which is a geometric, stronger, and generalised version of the weak algebraic conjecture on spherical classes. We have the following.

\begin{mthm}\label{n-foldtransfer}
Let $p$ be any prime. For $G=O(1)$ and $p=2$ or $G$ any compact Lie group with $\dim\mathfrak{g}>0$ and $p$ any prime, consider the the $n$-fold transfer $BG_+^{\wedge n}\to S^0$ associated to the embedding $1<G^{\times n}$
$${_p\pi_*^s}\Sigma^{n\dim\mathfrak{g}}BG^{\wedge n}_+\lra {_p\pi_*}Q_0S^0\lra H_*(Q_0S^0;\Z/p).$$
The following statements then hold.\\
(i) For $n>2$, the above composition is trivial.\\
(ii) For $n=2$, the above composition vanishes on all elements of Adams filtration at least $1$, i.e. those elements of ${_2\pi_*^s}\Sigma^{n\dim\mathfrak{g}}BG_+^{\wedge n}$ represented by a permanent cycle $\ext_{A_p}^{s,t}(\widetilde{H}^*\Sigma^{n\dim\mathfrak{g}}BG_+^{\wedge n},\Z/p)$ with $s>0$, map trivially under the above composition. Moreover, the image of this composition on the elements of Adams filtration $0$ is contained within $\Z/p\{h(\eta^2),h(\nu^2),h(\sigma^2)\}$. Furthermore, in the case of $G=O(1)$ and $p=2$ the image of the above composition is precisely given by $\Z/2\{h(\eta^2),h(\nu^2),h(\sigma^2)\}$
\end{mthm}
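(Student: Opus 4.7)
The proof rests on two ingredients: the multiplicative factorization of the $n$-fold transfer, and Theorem \ref{main0}(ii). Since the transfer associated to the embedding $1<G^{\times n}$ equals the smash product $\tau^{(n)}=\tau^{\wedge n}$ of $n$ copies of the single transfer $\tau\colon\Sigma^{\dim\mathfrak{g}}BG_+\to S^0$, for any $\alpha\in{_p\pi_*^s}\Sigma^{n\dim\mathfrak{g}}BG_+^{\wedge n}$ the image $\tau^{(n)}_*(\alpha)\in{_p\pi_*^s}$ decomposes as a sum of $n$-fold products $\beta_1\cdots\beta_n$ in the stable homotopy ring, where each $\beta_i$ is a positive-dimensional class in the image of $\tau_*$. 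The positive-degree condition is automatic when $\dim\mathfrak{g}>0$; for $G=O(1)$ it follows from the Kahn--Priddy theorem, which identifies the image of $\tau_*$ with ${_2\pi_{>0}^s}$.

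For part (i) with $n\geq 3$, I group such a product as $(\beta_1\beta_2)\cdot(\beta_3\cdots\beta_n)$ and invoke Theorem \ref{main0}(ii): non-vanishing of $h$ would force $\beta_1\beta_2=\beta_3\cdots\beta_n$ to be an odd multiple of $\eta,\nu$, or $\sigma$. Since each $\beta_i$ has positive degree, both sides have degree at least $2$, ruling out the $\eta$ case. For common degree $3$ in ${_2\pi_3^s}=\Z/24\{\nu\}$, the only non-zero product with both factors of positive degree is $\eta\cdot\eta^2=\eta^3=12\nu$, an even multiple of $\nu$; for common degree $7$, all products with positive-degree factors vanish in ${_2\pi_7^s}$, since $\eta\nu=0$ and ${_2\pi_4^s}={_2\pi_5^s}=0$. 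Hence $h\circ\tau^{(n)}_*=0$ for $n\geq 3$.

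For part (ii), $n=2$, the product decomposition together with Theorem \ref{main0}(ii) immediately gives the containment of the image in $\Z/p\{h(\eta^2),h(\nu^2),h(\sigma^2)\}$. For the Adams filtration refinement I verify that the single transfer $\tau$ has positive Adams filtration in $\pi_s^0(\Sigma^{\dim\mathfrak{g}}BG_+)$ -- for $G=O(1)$ this reflects $\tau$ acting as multiplication by $2$ on the bottom cell, and for $\dim\mathfrak{g}>0$ it is a standard property of Becker--Gottlieb transfers -- so that $\tau^{(2)}=\tau\wedge\tau$ has Adams filtration $\geq 2$. Any source element $\alpha$ of Adams filtration $s\geq 1$ then yields $\tau^{(2)}_*(\alpha)\in F^{s+2}{_p\pi_*^s}\subseteq F^3{_p\pi_*^s}$; since the odd multiples of $\eta^2,\nu^2,\sigma^2$ all lie in $F^2\setminus F^3$, they cannot appear in $\tau^{(2)}_*(\alpha)$, and Theorem \ref{main0}(ii) forces $h(\tau^{(2)}_*(\alpha))=0$.

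The exact-image claim for $G=O(1),p=2$ is proved using Kahn--Priddy to pick preimages $\tilde\eta,\tilde\nu,\tilde\sigma\in{_2\pi_*^s}\R P_+$ of $\eta,\nu,\sigma$ under $\tau_*$; their external smash products $\tilde\eta\wedge\tilde\eta,\tilde\nu\wedge\tilde\nu,\tilde\sigma\wedge\tilde\sigma\in{_2\pi_*^s}\R P_+^{\wedge 2}$ map under $\tau^{(2)}$ to $\eta^2,\nu^2,\sigma^2$, realizing all three generators of the image. The main technical hurdles I anticipate are justifying the product decomposition of $\tau^{(n)}_*(\alpha)$ for general (not necessarily decomposable) $\alpha$, which requires care with the external-product/K\"unneth structure on stable homotopy of smash products, and uniformly confirming the positive-filtration property of the single transfer across the cases of $G$ considered.
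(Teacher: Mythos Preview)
Your overall strategy matches the paper's: establish that the image of the $n$-fold transfer lies in the decomposables of ${_p\pi_*^s}$, then feed this into Theorem \ref{main0}. For part (ii) and the exact-image claim your argument is essentially the paper's.

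The genuine difference is in part (i). The paper proves a separate proposition that the $n$-fold transfer raises Adams filtration by $n$, and then combines ``decomposable $+$ filtration $\geq 3$'' with Theorem \ref{necessary}. You instead group each $n$-fold product as $(\beta_1\beta_2)(\beta_3\cdots\beta_n)$ and run a short case analysis in $\pi^s_{\leq 7}$ to rule out $\beta_1\beta_2$ being an odd multiple of $\eta,\nu,\sigma$. This is more elementary and avoids the filtration proposition entirely, but it costs you a stronger hypothesis: you need the image to lie in $I^3$ (sums of \emph{three}-fold products), not just $I^2$ as the paper uses.

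That brings us to your acknowledged hurdle, which is really the heart of the matter. The paper handles it not by any K\"unneth-type argument for stable homotopy of smash products (there is none that would give what you want), but by observing that the single transfer $\tau$ lifts to the augmentation spectrum $IS^0=\mathrm{fib}(S^0\to H\Z_{(p)})$ --- for $G=O(1)$ because $\lambda$ is degree $2$ on the bottom cell, and for $\dim\mathfrak g>0$ because the source has no $0$-cells --- so that $\tau^{\wedge n}$ factors through $(IS^0)^{\wedge n}\to S^0$. This is Lemma \ref{trans-decomposable}. Your claim that each $\beta_i$ lies in $\mathrm{im}(\tau_*)$ is unjustified (and unnecessary); what you actually need, positivity of $\deg\beta_i$, comes precisely from this $IS^0$ lift. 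Note also that your route for (i) would require the $n$-fold analogue of the lemma (image in $I^n$), which the same $IS^0$ argument is meant to give; the paper's route only needs the $2$-fold version together with the filtration raise.
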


The above theorem proves a geometric version of the weak conjecture on spherical classes. Due to the above consideration on the relation between vanishing of Lannes-Zarati homomorphism and Hurewicz homomorphism, our theorem provides a stronger result for Curtis conjecture \ref{conjecture}. Also noted that we have stated our theorem for any compact Lie group, hence a generalised form of the conjecture of Hu'ng. We have to add that this is still far from a complete proof Conjecture \ref{conjecture}. By the generalised Kahn-Priddy Theorem (see\cite{Finkelstein}, \cite{Kuhn-extended}) there is a map $\R P\times QD_2\R P\to Q\R P$ which induces an epimorphism in ${_2\pi_*}$. Combined with Kahn-Priddy's theorem \cite[Theorem 3.1]{Kahn-Priddy}, at the prime $2$, we obtain a sequence of epimorphisms
$${_2\pi_n}QD_2\R P\to {_2\pi_n}Q\R P\to {_2\pi_n}Q_0S^0$$
for $n>1$. Now, the space $\R P\wedge\R P$ is just the first filtration of the space $D_2\R P=S^{\infty}\ltimes_{\Sigma_2}\R P^{\wedge 2}$ which is filtered by $D_2^i\R P=S^i\ltimes_{\Sigma_2}\R P^{\wedge 2}$ with $D_2^0\R P=\R P^{\wedge 2}$ \cite{Eccles-browder}. The conjecture can be resolved if one can say something about the homotopy of $QD_2\R P\to Q\R P\to Q_0S^0$.\\

As an application, we provide a generalisation of Minami's results on the factorisation of Kervaire invariant one elements.
\begin{mthm}\label{gen.minami}
For $j>3$ and $n>1$, the Kervaire invariant one elements $\theta_j\in{_2\pi_{2^{j+1}-2}^s}$ do not factor through the $n$-fold transfer $\Sigma^{n\dim\mathfrak{g}}BG^{\wedge n}_+\to S^0$ where $G=O(1)$ or $G$ any compact Lie group with $\dim\mathfrak{g}>0$.
\end{mthm}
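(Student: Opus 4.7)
The plan is to derive Theorem \ref{gen.minami} as a direct consequence of Theorem \ref{n-foldtransfer}, together with the defining property of the Kervaire invariant one elements. Concretely, by Browder's theorem the Kervaire invariant one element $\theta_j$ is characterised (when it exists) by having nonzero Hurewicz image $h(\theta_j) \in H_{2^{j+1}-2}(Q_0 S^0; \Z/2)$, represented by a nontrivial iterated Dyer-Lashof operation on the fundamental class. Assuming toward a contradiction a factorisation $\theta_j = t_n \circ \tilde\theta_j$ with $\tilde\theta_j \in {_2\pi_{2^{j+1}-2}^s}\Sigma^{n\dim\mathfrak{g}}BG_+^{\wedge n}$ along the $n$-fold transfer $t_n$, the class $h(\theta_j)$ must then lie in the image of the composition analysed in Theorem \ref{n-foldtransfer}.

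I would then split into the two cases carved out by that theorem. When $n > 2$, part (i) asserts that the composition vanishes identically, forcing $h(\theta_j) = 0$ and contradicting the nontriviality above. When $n = 2$, part (ii) constrains the image to lie inside the three-dimensional $\Z/2$-subspace $\Z/2\{h(\eta^2), h(\nu^2), h(\sigma^2)\}$, whose nonzero elements live in degrees $2$, $6$, and $14$. Since $h$ is degree-preserving and $|\theta_j| = 2^{j+1} - 2 \geq 30$ for $j > 3$, this again forces $h(\theta_j) = 0$, delivering the desired contradiction.

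The argument is essentially mechanical once Theorem \ref{n-foldtransfer} is in hand, and I expect no genuine obstacle beyond that. The only subtle point worth flagging is the sharpness of the hypothesis $j > 3$: at $j = 3$ one has $\theta_3 = \sigma^2$ in degree $14 = |\sigma^2|$, so the dimension argument in the $n = 2$ case would collapse, and indeed $\sigma^2$ visibly does factor through the double transfer. The extension from $G = O(1)$ to arbitrary compact Lie groups with $\dim\mathfrak{g} > 0$, as well as the treatment of the borderline $n = 2$ case, are both inherited directly from the corresponding generality in Theorem \ref{n-foldtransfer}, so no additional work is required here; all the content sits upstream in that theorem.
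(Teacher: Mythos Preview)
Your argument is correct and coincides with the alternative proof the paper gives in the final paragraph of Section~5: assume a factorisation, invoke $h(\theta_j)\neq 0$ (the paper cites Madsen rather than Browder), and use Theorem~\ref{n-foldtransfer} together with the degree count to reach a contradiction. The paper's primary proof is organised slightly differently---it first isolates the intermediate statement that $\theta_j$ is indecomposable for $j>3$ (via $h(\theta_j)\neq 0$ and Theorem~\ref{main0}), and then appeals directly to Lemma~\ref{trans-decomposable} rather than to Theorem~\ref{n-foldtransfer}---but since Theorem~\ref{n-foldtransfer} is itself built from those two ingredients, the content is the same.
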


We invite the reader to compare our method of proof to the original method of Minami using $BP$-based ASS arguments.\\

\tb{Acknowledgements.} I am grateful to Nick Kuhn for helpful discussions on the conjecture and in particular on the Lannes-Zarati homomorphism, to Nguy\^en H.V. {Hu'ng} for drawing my attention to their joint work with F. Peterson and in particular to \cite[Proposition 5.4]{HungPeterson}, to Geoffrey Powell and Gerald Gaudens for an invitation to LAERMA during October 2014 as well as to the University of Tehran, IPM and LAERMA whom their partial support made this visit possible.

\section{Preliminaries: The product in $\pi_*^s$}
The material here are classic for which our main references are \cite{BarrattHilton} and \cite[Chapter 8]{May-G}. Given stable maps $f:S^n\to S^0$ and $g:S^m\to S^0$ we may compose them in two way. First, we have $f\wedge g:S^{n+m}\to S^0$. Second, we may consider their composition product $S^{n+m}\stackrel{\Sigma^m f}{\to}S^m\stackrel{g}{\to} S^0$. It is known that the two products agree in the stable homotopy ring \cite[Theorem 3.2]{BarrattHilton}, \cite[Lemma 8.7]{May-G}. The following then is obvious -
\begin{prp}
An element in ${_2\pi_*^s}$ is $\wedge$-decomposable if and only if it is $\circ$-decomposable.
\end{prp}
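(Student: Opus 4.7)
The plan is essentially formal, relying on the identification of the two products recalled in the paragraph preceding the statement. By \cite[Theorem 3.2]{BarrattHilton} and \cite[Lemma 8.7]{May-G}, for any stable maps $f:S^n\to S^0$ and $g:S^m\to S^0$ with $n,m>0$ one has the equality $f\wedge g = g\circ\Sigma^m f$ in the stable homotopy ring $\pi^s_{n+m}$. My first step is simply to observe that this identity matches $\wedge$-monomials with $\circ$-monomials: if $f,g$ are of positive degree, then $g\circ\Sigma^m f$ is also a composition of two stable maps of positive degree (the composition factor $\Sigma^m f:S^{n+m}\to S^m$ has positive degree whenever $f$ does), so a positive-degree $\wedge$-monomial represents the same element of $\pi^s_*$ as a positive-degree $\circ$-monomial, and conversely every such $\circ$-monomial arises in this way from some pair $f,g$.

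To pass from monomials to arbitrary decomposable elements, I would invoke that both $\wedge$ and $\circ$ are $\Z$-bilinear operations on $\pi^s_*$. Hence the identity on monomials extends by linearity to arbitrary finite sums: any expression $\sum_i f_i\wedge g_i$ with all $f_i,g_i$ of positive degree equals $\sum_i g_i\circ\Sigma^{|f_i|}f_i$, and the translation also runs in the reverse direction. Consequently the subgroup of $\pi^s_*$ generated by $\wedge$-decomposable elements equals the one generated by $\circ$-decomposable elements, and this persists after tensoring with $\Z_{(2)}$ to obtain the corresponding statement in ${_2\pi_*^s}$.

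There is no genuine obstacle here; the only item to keep track of is that the cited agreement of products is a true equality in the stable homotopy ring rather than merely an equality up to a sign or a unit (which might in principle create an off-by-one mismatch between the two notions of decomposable), and both \cite{BarrattHilton} and \cite{May-G} verify precisely this. The proposition thus reduces cleanly to the cited identification of the two products together with bilinearity.
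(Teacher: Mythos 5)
Your proposal is correct and is essentially the argument the paper intends: the paper simply cites \cite[Theorem 3.2]{BarrattHilton} and \cite[Lemma 8.7]{May-G} for the agreement of the two products and declares the proposition obvious, which is exactly the monomial identity you spell out, extended by bilinearity and localisation at $2$. You have merely made explicit the routine steps the paper leaves to the reader.
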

For this reason, we use the two products interchangeably; for $f\in\pi_i^s$ and $g\in\pi_j^s$ we allow ourselves to think of $gf$ as $S^{i+j}\to S^j\to S^0$. Moreover, since we choose to work with $\pi_*QS^0\simeq\pi_*^s$, we will consider stale adjoints of above stable compositions, such as $S^{i+j}\to QS^j\to QS^0$ in order to realise $gf$ as an element in $\pi_*QS^0$. Let's conclude by recalling that $\pi_*^s$ is a commutative ring \cite{Barratt-Toda}.

\section{Hurewicz homomorphism and products}
The aim of this section is to provide a simple proof for Theorem \ref{main0}. We begin with the following.

\begin{lmm}\label{ij}
Let $f\in{\pi_i}Q_0S^0,g\in{\pi_j}Q_0S^0$ with $i,j>0$ and $i\neq j$. Then $h(fg)=0$.
\end{lmm}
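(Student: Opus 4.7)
The plan is to realise the product $fg$ as a composition through $QS^j$, where a connectivity argument forces the relevant Hurewicz image to vanish. Since $\pi_*^s$ is commutative I may assume without loss of generality that $i<j$. Using the identification between the composition and smash products on $\pi_*^s$ recalled in Section 2, $fg$ is represented by the stable composite
\[ S^{i+j}\xrightarrow{\Sigma^j f}S^j\xrightarrow{g}S^0, \]
whose infinite loop adjoint factors as $S^{i+j}\xrightarrow{\tilde F}QS^j\xrightarrow{Qg}Q_0S^0$, where $\tilde F$ is the adjoint of $\Sigma^j f$ and $Qg$ is induced functorially from $g$. By naturality of the Hurewicz homomorphism, $h(fg)=(Qg)_*h(\tilde F)$, so it suffices to verify that $h(\tilde F)=0$ inside $H_{i+j}(QS^j;\Z)$.

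For this I would invoke Snaith's stable splitting
\[ \Sigma^\infty QS^j \simeq \bigvee_{n\geq 1}\Sigma^\infty D_nS^j, \qquad D_nS^j = E\Sigma_n\ltimes_{\Sigma_n}(S^j)^{\wedge n}. \]
Each extended power $D_nS^j$ is $(nj-1)$-connected, so in degrees strictly below $2j$ only the $n=1$ summand $D_1S^j=S^j$ contributes to $\widetilde H_*(QS^j;\Z)$. The hypothesis $0<i<j$ translates to $j<i+j<2j$, and since $\widetilde H_{i+j}(S^j;\Z)=0$ in that range we conclude $H_{i+j}(QS^j;\Z)=0$. Therefore $h(\tilde F)=0$, whence $h(fg)=0$.

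The only thing requiring care is verifying that the composite $Qg\circ\tilde F$ genuinely represents the class $fg\in\pi_{i+j}Q_0S^0$, but this is immediate from the agreement of the composition and smash products on $\pi_*^s$ recalled in Section 2. As an alternative to Snaith's splitting, the same vanishing of $\widetilde H_n(QS^j;\Z)$ for $j<n<2j$ can be read off from the Kudo--Araki description of $H_*(QS^j;\Z/p)$ as a polynomial (and exterior) algebra on Dyer--Lashof monomials $Q^I\iota_j$, since every decomposable generator occurs in degree at least $2j$, combined with a standard Bockstein argument to lift the conclusion to integral coefficients.
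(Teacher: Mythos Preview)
Your argument is correct. Both your proof and the paper's hinge on the same connectivity observation in the range $j<i+j<2j$, but they exploit it on different sides of the Hurewicz map. The paper applies the Freudenthal suspension theorem directly: since $i<j$ means $S^{i+j}\to QS^j$ lies in the stable range, it desuspends to a genuine map $S^{i+j}\to S^j$, and the composite $S^{i+j}\to S^j\to Q_0S^0$ is then trivial in $H_{i+j}$ simply because it factors through a $j$-sphere. You instead leave the map in $QS^j$ and kill $H_{i+j}(QS^j;\Z)$ via the Snaith splitting (or the Dyer--Lashof description). Your route is perfectly valid and perhaps more robust conceptually, but it invokes substantially more machinery than needed; the paper's Freudenthal argument is more elementary and avoids any computation of $H_*(QS^j)$ altogether.
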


\begin{proof}
The product $gf$ in ${\pi_*^s}$ is determined by the composition of stable maps $S^{i+j}\stackrel{f}{\to}S^j\stackrel{g}{\to}S^0$ which we wish to compute homology of its stable adjoint $S^{i+j}\to QS^j\to QS^0$. For $i<j$ the map $S^{i+j}\lra S^j$ is in the stable range, so it can be taken as a genuine map. More precisely, by Freudenthal suspension theorem, $S^{i+j}\to QS^j$ does factorise as $S^{i+j}\to S^j\to QS^j$ where the part $S^{i+j}\to S^j$ is not necessarily unique. The stable adjoint of $gf$ then maybe viewed as $S^{i+j}\stackrel{f}{\lra}S^j\stackrel{g}{\lra}Q_0S^0$
which is trivial in homology for dimensional reasons. The indeterminacy in choosing the pull back $S^{i+j}\to S^j$ is irrelevant here and will not effect the dimensional reason. Hence, $h(gf)=0$. The case $i>j$ is similar, noting that $\pi_*^s=\pi_*Q_0S^0$ is commutative.
\end{proof}

Note that the above result hold integrally, and consequently on ${_p\pi_*^s}$ for any prime $p$. According to the above lemma, $h(fg)\neq 0$ may occur if $f,g\in{\pi_n}Q_0S^0$. Note that for $f\in{\pi_n^s}$ there exists $\widetilde{f}\in\pi_{2n+1}S^{n+1}$, not necessarily unique, which maps to $f$ under the stabilisation $\pi_{2n+1}S^{n+1}\lra\pi_n^s$. We shall say $f$ is detected by the unstable Hopf invariant, if $\widetilde{f}$ is detected by cup-squaring operation in its mapping cone, i.e. $g_{n+1}^2\neq 0$ in $H^*(C_{\widetilde{f}};\Z)$.

\begin{thm}\label{nilpotence}
Suppose $f,g\in\pi_nQ_0S^0$  with $h(fg)\neq 0$ with $h:\pi_*Q_0S^0\lra H_*(Q_0S^0;\Z)$ being the Hurewicz homomorphism. Then both $f$ and $g$ are detected by the unstable Hopf invariant.
\end{thm}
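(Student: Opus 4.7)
The plan is to realise $fg$ via a composition of stable maps. Using the $\wedge$-$\circ$ equivalence from the preliminaries, $fg$ is represented by the stable composition
$$S^{2n}\stackrel{\Sigma^n f}{\lra} S^n\stackrel{g}{\lra} S^0,$$
whose stable adjoint is
$$S^{2n}\stackrel{f'}{\lra} QS^n\stackrel{Qg}{\lra} Q_0S^0,$$
so that $h(fg)=(Qg)_*f'_*[S^{2n}]$ in $H_{2n}(Q_0S^0;\Z)$. The problem thus reduces to analysing the class $f'_*[S^{2n}]\in H_{2n}(QS^n;\Z)$.

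Next I would invoke the James model. By Freudenthal, $\pi_{2n+1}S^{n+1}\to\pi_n^s$ is surjective, so $f$ admits an unstable representative $\widetilde{f}:S^{2n+1}\to S^{n+1}$. Its loop-adjoint $f'':S^{2n}\to\Omega S^{n+1}$ then factors $f'$ through the canonical inclusion,
$$S^{2n}\stackrel{f''}{\lra}\Omega S^{n+1}\stackrel{\iota}{\lra} QS^n.$$
The classical James-Hopf identification reads $f''_*[S^{2n}]=H(\widetilde{f})\cdot x_{2n}$ in $H_{2n}(\Omega S^{n+1};\Z)\cong\Z$, where $x_{2n}$ is the James generator and $H(\widetilde{f})\in\Z$ is precisely the integral unstable Hopf invariant determined by $g_{n+1}^2=H(\widetilde{f})\,g_{2n+2}$ in $H^*(C_{\widetilde{f}};\Z)$.

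Putting these together, $h(fg)=H(\widetilde{f})\cdot (Qg\circ\iota)_*(x_{2n})$. Hence if $H(\widetilde{f})=0$ then $h(fg)=0$. Contrapositively, $h(fg)\neq 0$ forces $H(\widetilde{f})\neq 0$, i.e.\ $f$ is detected by the unstable Hopf invariant. Finally, by the Barratt-Toda commutativity $fg=gf$ recalled in the preliminaries, applying the same argument to the composition $S^{2n}\stackrel{\Sigma^n g}{\lra} S^n\stackrel{f}{\lra} S^0$ shows that $g$ is likewise detected.

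The only non-routine input is the classical James-Hopf identification of the homology of the loop-adjoint with the Hopf invariant, which I would cite rather than reprove; this is also the main potential obstacle, in that any deviation from the standard normalisation of $x_{2n}$ must be tracked carefully. A minor subtlety worth noting is that different unstable lifts of $f$ differ by a Whitehead square, whose Hopf invariant is $0$ or $\pm 2$; this has no effect on the implication $H(\widetilde{f})=0\Rightarrow h(fg)=0$ used above.
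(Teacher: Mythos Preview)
Your proof is correct and follows essentially the same route as the paper's: factor the adjoint of $fg$ through $\Omega S^{n+1}\hookrightarrow QS^n$ via a Freudenthal lift $\widetilde{f}$, identify the Hurewicz image in $H_{2n}(\Omega S^{n+1};\Z)$ with the integral Hopf invariant of $\widetilde{f}$ (citing the same classical fact), and then use commutativity of $\pi_*^s$ to treat $g$. Your version is slightly more explicit about the normalisation $h(fg)=H(\widetilde{f})\cdot(Qg\circ\iota)_*(x_{2n})$ and about the indeterminacy in the choice of lift, but these are expository refinements rather than a different argument.
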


\begin{proof}
We are interested in the stable adjoint of $S^{2n}\stackrel{f}{\lra}S^n\stackrel{g}{\lra}S^0$ given by
$$S^{2n}\stackrel{f_n}{\lra} QS^n\stackrel{g}{\lra} Q_0S^0.$$
As noted above, for dimensional reasons, the mapping $f_n$ factors as $S^{2n+1}\stackrel{\widetilde{f}_n}{\lra}\Omega S^{n+1}\lra QS^n$ where $\widetilde{f}_n$, which is not necessarily unique, is the adjoint for an appropriate $\widetilde{f}$; the map $i:\Omega S^{n+1}\lra QS^n$ is the stablisation map. Hence, the stable adjoint of $fg$ can be seen as a composite
$$S^{2n}\lra \Omega S^{n+1}\lra QS^n\lra Q_0S^0.$$
Now, $h(fg)\neq 0$ implies that $h(\widetilde{f}_n)\neq 0$. This shows that $h(\widetilde{f}_n)=\lambda g_n^2$ for some nonzero $\lambda\in\Z$ where $H_*(\Omega S^{n+1}\simeq T(g_n)$ is the tensor algebra over an $n$-dimensional generator $g_n\in H_n(\Omega S^{n+1};\Z)$ by James' results \cite{Whitehead}.
On the other hand, it is well know that $h(\widetilde{f}_n)=\lambda g_n^2$ if and only if $g_{n+1}^2=\pm\lambda g_{2n+2}$ in $H^*(C_{\widetilde{f}};\Z)$, i.e. $f$ is detected by the unstable Hopf invariant (see for example \cite[Proposition 6.1.5]{Harper}). Similarly, $g$ is also detected by the unstable Hopf invariant.
\end{proof}

The result in integral case, implies the $p$-primary case. In particular, we have the following.

\begin{crl}
Suppose $f,g\in{_2\pi_n}Q_0S^0$  with $h(fg)\neq 0$ then both $f$ and $g$ are Hopf invariant one elements, i.e. $f,g=\eta,\nu,\sigma$. Here,  $h:{_2\pi_*}Q_0S^0\lra H_*(Q_0S^0;\Z/2)$ is the mod $2$ Hurewicz homomorphism.
\end{crl}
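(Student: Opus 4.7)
The plan is to derive this corollary by combining Theorem \ref{nilpotence} with Adams' Hopf invariant one theorem, after a short reduction to $\Z/2$-coefficients.

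First I would rerun the argument of Theorem \ref{nilpotence} with $\Z/2$-coefficients in place of $\Z$. Given $f,g\in{_2\pi_n}Q_0S^0$ with $h(fg)\neq 0$ in $H_*(Q_0S^0;\Z/2)$, I would factor the stable adjoint of $fg$ as
$$S^{2n}\stackrel{\widetilde f_n}{\lra}\Omega S^{n+1}\lra QS^n\stackrel{g}{\lra}Q_0S^0,$$
where $\widetilde f_n$ is the adjoint of a (not necessarily unique) unstable lift $\widetilde f\in\pi_{2n+1}S^{n+1}$ of $f$ supplied by Freudenthal. Non-vanishing of $h(fg)$ mod $2$ forces $h(\widetilde f_n)\neq 0$ in $H_{2n}(\Omega S^{n+1};\Z/2)\cong\Z/2\{g_n^2\}$, so $h(\widetilde f_n)=g_n^2$. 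By the mapping-cone argument used in the proof of Theorem \ref{nilpotence}, now with $\Z/2$-coefficients, this is equivalent to $g_{n+1}^2\neq 0$ in $H^*(C_{\widetilde f};\Z/2)$, i.e.\ to $\mathrm{Sq}^{n+1}$ acting non-trivially between the two cells of $C_{\widetilde f}$. This is precisely the classical mod $2$ Hopf invariant one condition on $\widetilde f$.

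The second step is to invoke Adams' Hopf invariant one theorem, which forces $n\in\{1,3,7\}$ and identifies $f$, up to multiplication by a $2$-local unit (i.e.\ an odd integer), with $\eta$, $\nu$, or $\sigma$ respectively. Since $\pi_*^s$ is commutative, running the same argument with the roles of $f$ and $g$ swapped gives the same conclusion for $g$; as $f$ and $g$ live in the same degree $n$, they must agree with the same Hopf invariant one element.

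I do not anticipate a genuine obstacle here: the corollary is essentially a formal consequence of Theorem \ref{nilpotence} and Adams' theorem. The only point that really needs to be checked is the mod $2$ analogue of the cup-square versus $\mathrm{Sq}^{n+1}$ characterisation of the Hopf invariant used integrally in the proof of Theorem \ref{nilpotence}, but this is standard and follows directly from the mod $2$ cohomology long exact sequence of the cofibre sequence $S^{2n+1}\to S^{n+1}\to C_{\widetilde f}$.
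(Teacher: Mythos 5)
Your proposal is correct and follows essentially the same route as the paper, which simply observes that the integral Theorem \ref{nilpotence} implies the $2$-primary statement and then (implicitly) invokes Adams' Hopf invariant one theorem to identify $f$ and $g$ with $\eta,\nu,\sigma$ up to odd multiples. Your extra care in rerunning the argument mod $2$ so as to obtain an \emph{odd} Hopf invariant, rather than merely a nonzero integral one, is a worthwhile refinement, since Adams' theorem requires exactly that parity condition.
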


In order to complete the proof of Theorem \ref{main0} note that by above observations the nontrivial image of the Hurewicz homomorphism on the the ideal $\la f:f=\eta,\nu,\sigma\ra$ can only arise from elements $f,f^2$. Moreover, choosing $g:S^0\lra S^0$ to be a stable map of odd degree, $f=\eta,\nu,\sigma$, then $fg$ is an odd multiple of a Hopf invariant one element which we know map nontrivially under $h$.\\
For the case of $h^\circ$ we note that given $\alpha\in\pi_iQ_0S^0$ and $f\in\pi_j\Omega^\infty E$ the Hurewicz image of $f\circ \alpha$ is determined by the homology of the composition
$$S^{i+j}\stackrel{\alpha}{\lra} QS^j\stackrel{f}{\lra} \Omega^\infty E.$$
By similar reasons, if $i<j$ then by Fruedenthal's theorem $\alpha$ doesfactorise through some map $S^{i+j}\to S^j$, hence $h^\circ(\alpha\circ f)=0$. If $i=j$ then $\alpha$ admits a factorisation as $S^{2i}\to\Omega\Sigma S^i\to QS^j\to \Omega^\infty E$ which if is nontrivial in homology which imply that $\alpha$ is detected by the unstable Hopf invariant, so $\alpha\in\{\eta,\nu,\sigma\}$. Finally, for the case $i>j$, note that $(i-j+1)$-th adjoint of $\alpha:S^{i+j}\to QS^j$ is a map $S^{2i+1}\to QS^{i+1}$ which does factorise throughout $S^{2i+1}\to S^{i+1}$ so $e_*^{i-j+1}h^\circ(\alpha\circ f)=0$.


\section{Hurewicz homomorphism and $n$-fold transfer maps}\label{Hurewicz-transfer}
This section is devoted to the proof of Theorem \ref{n-foldtransfer}. Suppose $H<G$ are compact Lie group and consider the fibration $G/H\to BH\stackrel{p}{\to} BG$. Let $E$ be a space with a free action of $G$ and let $\mu_G=E\times_G \mathfrak{g}\to E/G$, $G$ acting on $\mathfrak{g}$ through its adjoint representation, be the associated bundle whose fibre is the Lie algebra of $\mathfrak{g}$ of $G$. Suppose $\alpha\to E/H$ is a (virtual) bundle. The transfer map then is a stable map between Thom spectra \cite{Milequi}, \cite{BeckerSchultz1} and \cite{Morisugi}
$$t_H^G:(E/G)^{\mu_G+\alpha}\to(E/H)^{\mu_H+p^*\alpha}.$$
This construction enjoys natural properties to expect. For instance, the transfer associated to the identity $1:G\to G$ induces the identity between Thom spectra. It follows that \cite[Note 1.14]{Milequi} for $H_1<G_1$ and $H_2<G_2$, with twisting bundles $\alpha_1\to E_1/G_1$ and $\alpha_2\to E_2/G_2$, we have
$$t_{H_1\times H_2}^{G_1\times G_2}=t_{H_1}^{G_1}\wedge t_{H_2}^{G_2}.$$
If we choose $H=1$ to be the trivial group, and choose $E$ to be a contractible free $G$-space, filtering $E$ by compact manifolds if necessary as noted by \cite{Milequi}, then by choosing $\alpha=-\mu_G$ we have a transfer map
$$BG^0\lra B1^{-p^*\mu_G}=B1^{-\R^{\dim\mathfrak{g}}}=S^{-\dim\mathfrak{g}}$$
which after $\dim\mathfrak{g}$ times suspension yields
$$\Sigma^{\dim\mathfrak{g}}BG_+\to S^0.$$
Replacing $G$ with $G^{\times n}$ and $1$ with $1^{\times n}$ yields a transfer map $\Sigma^{n\dim\mathfrak{g}}BG_+^{\wedge n}\to (S^0)^{\wedge n}$ which after composition with the ring spectrum multiplication of $S^0$, $(S^0)^{\wedge n}\to S^0$ yields the $n$-fold transfer map
$$\Sigma^{n\dim\mathfrak{g}}BG_+^{\wedge n}\to (S^0)^{\wedge n}\to S^0$$
of the abstract. We are interested in the cases with $G=O(1)^{\times n}$ and $G=(S^1)^{\times n}$ where we have
$$\R P^{\wedge n}_+\to S^0,\ \ \Sigma^n\C P_+\to S^0$$
of which the case $G=(S^1)^{\times n}$ has been extensively studied (see for example \cite{Baker-doubletransfer}, \cite{Imaoka-factorisation}, \cite{BCGHRW}). In the case $n=1$, $G=O(1)$, after Kahn-Priddy \cite[Theorem 3.1]{Kahn-Priddy}, we know that the transfer $\lambda:\R P_+\to S^0$ induces an epimorphism in ${_2\pi_*^s}$. Before proceeding with the proof of Theorem \ref{n-foldtransfer}, we need to sort out a little technical issue.
\begin{lmm}\label{trans-decomposable}
Suppose $n>1$ and $p$ is a prime. Let $G=O(1)$ and $p=2$, or $G$ any compact Lie group with $\dim\mathfrak{g}>0$. Then, at the prime $p$, the image of the $n$-fold transfer map $\Sigma^{n\dim\mathfrak{g}}BG_+^{\wedge n}\to S^0$ on ${_p\pi_*^s}$ falls into the submodule of ${_p\pi_*^s}$ spanned by the product of positive dimensional elements.
\end{lmm}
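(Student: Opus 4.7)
The plan is to exploit the construction of the $n$-fold transfer as the smash product $\tau^{\wedge n}$ of $n$ copies of the single transfer $\tau:\Sigma^{\dim\mathfrak{g}}BG_+\to S^0$, composed tautologically with the ring multiplication $(S^0)^{\wedge n}=S^0$. First I would establish that, after projecting out the basepoint summand, the image of $\tau$ on ${_p\pi_*^s}$ already lies in positive stems: for $\dim\mathfrak{g}>0$, the fact that $\chi(G)=0$ forces the composite $S^{\dim\mathfrak{g}}\to\Sigma^{\dim\mathfrak{g}}BG_+\xrightarrow{\tau}S^0$ to be null, so $\tau$ factors through the reduced transfer $\bar\tau:\Sigma^{\dim\mathfrak{g}}BG\to S^0$ on the positively-connected $\Sigma^{\dim\mathfrak{g}}BG$; for $G=O(1)$ at $p=2$, the reduced Kahn-Priddy map $\bar\lambda:\R P\to S^0$ maps into positive stems of ${_2\pi_*^s}$ by the Kahn-Priddy theorem recalled earlier.

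The second step is to use the stable splitting $BG_+\simeq BG\vee S^0$ to decompose
\[
\Sigma^{n\dim\mathfrak{g}}BG_+^{\wedge n}\;\simeq\;\bigvee_{m=0}^{n}\bigl(\Sigma^{n\dim\mathfrak{g}}BG^{\wedge m}\bigr)^{\vee\binom{n}{m}},
\]
under which the restriction of $\tau^{\wedge n}$ to the $m$-th summand is identified with $\chi(G)^{n-m}\,\bar\tau^{\wedge m}$. On an external product $\alpha_1\wedge\cdots\wedge\alpha_m$ in the $\pi_*^s$ of that summand, this restriction returns the ring-theoretic product $\bar\tau(\alpha_1)\cdots\bar\tau(\alpha_m)$ in ${_p\pi_*^s}$, which for $m\geq 2$ is a product of at least two positive-dimensional elements and therefore lies in the decomposable submodule. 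When $\dim\mathfrak{g}>0$ only the $m=n$ summand contributes (since $\chi(G)=0$ kills the others), and this already settles the positive-dimensional Lie group case.

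The main obstacle is twofold. First, a typical class in ${_p\pi_*^s}$ of the source is not an external product, so the identity $\bar\tau^{\wedge n}(\alpha_1\wedge\cdots\wedge\alpha_n)=\bar\tau(\alpha_1)\cdots\bar\tau(\alpha_n)$ does not immediately describe the full image. I plan to handle this by iterating the factorisation
\[
\tau^{\wedge n}\;=\;\tau^{\wedge(n-1)}\circ\bigl(1^{\wedge(n-1)}\wedge\tau\bigr),
\]
so that the partial transfer collapses one factor to $S^0$ and yields a class in ${_p\pi_*^s}\Sigma^{(n-1)\dim\mathfrak{g}}BG_+^{\wedge(n-1)}$; by the connectivity analysis of the first step this class contributes one positive-dimensional factor through the remaining $(n-1)$-fold transfer, and induction on $n$ (with the external-product analysis as the base case $n=2$) packages each output as a decomposable in ${_p\pi_*^s}$. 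Second, for $G=O(1)$ the $m<n$ wedge summands come with the Euler-characteristic scalar $2^{n-m}$, and this is the delicate point: using the hypothesis $n>1$ one must argue that in every nonzero contribution enough applications of $\bar\lambda$, together with the surviving powers of $2$, still place the image in the decomposable submodule of ${_2\pi_*^s}$.
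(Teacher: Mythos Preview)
Your outline mirrors the paper's strategy in spirit---reduce to a smash of ``positive'' pieces and then read off decomposability---but it is missing the one device that actually closes the argument, and your proposed work-around does not fill the gap.

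The genuine gap is your ``first obstacle''. You are right that a general class in ${_p\pi_*^s}(X\wedge Y)$ need not be an external product, and this is precisely what must be handled. Your inductive scheme $\tau^{\wedge n}=\tau^{\wedge(n-1)}\circ(1^{\wedge(n-1)}\wedge\tau)$ does not help: applying $1^{\wedge(n-1)}\wedge\tau$ to a class merely produces another (still unfactored) class in ${_p\pi_*^s}$ of the $(n-1)$-fold source, and then $\tau^{\wedge(n-1)}$ sends it to some element of ${_p\pi_*^s}$; no product structure has been exhibited. So the induction just reduces you to $n=2$, where you still have to prove the statement for \emph{arbitrary} classes in ${_p\pi_*^s}(\Sigma^{\dim\mathfrak{g}}BG\wedge\Sigma^{\dim\mathfrak{g}}BG)$, not only for external products. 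Nothing in your plan does that.

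The paper closes exactly this gap by introducing, following Minami, the spectrum $IS^0$ defined as the fibre of the unit $S^0\to H\Z_{(p)}$, so that ${_p\pi_i}IS^0\simeq{_p\pi_i^s}$ for $i>0$ and ${_p\pi_0}IS^0=0$. The key point is that the restricted ring multiplication $IS^0\wedge IS^0\to S^0$ already has image in the submodule of ${_p\pi_*^s}$ spanned by products of positive-dimensional elements; this single observation replaces your missing $n=2$ base case. Since $H^0(\R P;\Z_{(p)})=0$ and (for $\dim\mathfrak{g}>0$) $\Sigma^{\dim\mathfrak{g}}BG_+$ has no $0$-cells, the single transfer lifts to $IS^0$, hence the $n$-fold transfer factors through $(IS^0)^{\wedge n}\to S^0$ and the conclusion follows without ever decomposing homotopy classes by hand. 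Your connectivity and Euler-characteristic remarks are correct and compatible with this, and your worry about the $O(1)$ case is well placed, but the $IS^0$ factorisation is the missing key idea.
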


\begin{proof}
Let $\Z_{(p)}$ denote the ring of $p$-adic integers. Following Minami \cite{Minami} consider the unit map $S^0\to H\Z_{(p)}$ and define the $p$-local augmentation of $S^0$, denoted by $IS^0$, to be the fibre of this map. The specturm $IS^0$ satisfies ${_p\pi_i}IS^0\simeq{_p\pi_i^s}$ with $i>0$ and ${_p\pi_0IS^0}\simeq 0$. The restriction of the ring spectrum multiplication on $S^0$ provides us with a pairing $IS^0\wedge IS^0\to S^0$. Hence, any element $f\in{_p\pi_i^s}$ with $i>0$ which admits a factorisation through $IS^0\wedge IS^0\to S^0$ must fall into the submodule of ${_p\pi_*^s}$ spanned by the product of positive dimensional elements of ${_p\pi_*^s}$.\\
For $G=O(1)$ and $p=2$ notice that the restriction of the Kahn-Priddy map, or equivalently the $\Z/2$ transfer, $\R P\vee S^0\simeq \R P_+\to S^0$ to the $S^0$ summand is stable map of degree $2$. This together with the fact that $H^0(\R P;\Z_{(2)})\simeq 0$ means that the composition $\R P_+\to S^0\to H\Z_{(2)}$ is null and hence lifts to a map $\widetilde{\lambda}:\R P\to IS^0$. The $n$-fold transfer $\R P^{\wedge n}_+\to S^0$ then does factorise as
$$\R P^{\wedge n}_+\to (IS^0)^{\wedge n}\to (S^0)^{\wedge n}\to S^0.$$
This proves the result in this case. For $G$ a compact Lie group with $\dim\mathfrak{g}>0$, since $\Sigma^{\dim\mathfrak{g}}BG_+$ has no cells in dimension $0$, hence $H^0(\Sigma^{\dim\mathfrak{g}}BG_+;\Z_{(p)})\simeq 0$. By a similar argument the transfer $\Sigma^{\dim\mathfrak{g}}BG_+\to S^0$ admits a lifting to $IS^0$ and consequently the $n$-fold transfer factors through $(IS^0)^{\wedge n}\to S^0$. The result then follows.
\end{proof}

An immediate consequence of the above lemma together with Theorem \ref{main0} is the following.

\begin{crl}
Choosing $G$ and $p$ as the above lemma, the image of the composition
$${_2\pi_*^s}\Sigma^{n\dim\mathfrak{g}}BG_+^{\wedge n}\to{_2\pi_*^s}\simeq{_2\pi_*}Q_0S^0\to H_*(Q_0S^0;\Z/2)$$
is contained within $\Z/p\{h(\eta^2),h(\nu^2),h(\sigma^2)\}$.
\end{crl}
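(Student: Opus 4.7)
The plan is purely to chain together Lemma \ref{trans-decomposable} and Theorem \ref{main0}(ii); the content of the corollary is essentially already present in those two results, and all that remains is a routine assembly. The hypothesis on $(G,p)$ is exactly what Lemma \ref{trans-decomposable} needs, and the output of that lemma feeds directly into the hypothesis of Theorem \ref{main0}(ii), so the proof should be only a few lines.

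First I would fix an element $x\in{_2\pi_*^s}\Sigma^{n\dim\mathfrak{g}}BG_+^{\wedge n}$ and denote by $\tau(x)$ its image in ${_2\pi_*^s}\simeq{_2\pi_*}Q_0S^0$ under the $n$-fold transfer. Since $n>1$, Lemma \ref{trans-decomposable} lets me write
\[
\tau(x)=\sum_{i} f_i g_i \qquad \text{with} \qquad \deg f_i,\,\deg g_i>0,
\]
a finite sum of products in the stable homotopy ring, taken under the identification ${_2\pi_*^s}\simeq{_2\pi_*}Q_0S^0$. Applying Theorem \ref{main0}(ii) term by term, for each $i$ either $h(f_ig_i)=0$, or $f_i$ and $g_i$ are odd scalar multiples of the same Hopf invariant one class, in which case $h(f_ig_i)\in\{h(\eta^2),h(\nu^2),h(\sigma^2)\}$. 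By $\Z/2$-linearity of $h$ the total sum $h(\tau(x))$ lies in $\Z/2\{h(\eta^2),h(\nu^2),h(\sigma^2)\}$, which is the desired conclusion. (The $\Z/p$ appearing in the statement is just $\Z/2$, since the target of the Hurewicz map here is mod $2$ homology.)

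There is no genuine obstacle at this stage; the only small point worth flagging is that the notion of ``product of positive-dimensional elements'' supplied by Lemma \ref{trans-decomposable} must match the notion used in Theorem \ref{main0}(ii). This is handled by the short proposition in the Preliminaries section equating $\wedge$-decomposability and $\circ$-decomposability in ${_2\pi_*^s}$, so nothing beyond the mechanical combination is required to finish.
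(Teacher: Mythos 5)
Your proof is correct and follows exactly the route the paper intends: the paper labels this an ``immediate consequence'' of Lemma \ref{trans-decomposable} combined with Theorem \ref{main0}, and your term-by-term application of Theorem \ref{main0}(ii) to the decomposition supplied by the lemma, followed by linearity of $h$, is precisely that argument spelled out. Nothing further is needed.
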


The proof of Theorem \ref{n-foldtransfer} will be complete once we deal with the special case of $G=O(1)$. It was observed by Lin \cite[Theorem 1.1]{Lin} that the Kahn-Priddy map $\R P\to S^0$, or equivalently the transfer associated to $1<O(1)$, induces a map of Adams spectral sequences $\ext^{s,t}_A(\R P)\to E_2^{s+1,t+1}(S^0)$ which is an epimorphism. For the moment, we are more interested in the raise of the filtration which is easier to prove and generalise. We have the following.

\begin{prp}
Suppose $n>1$ and $p$ is a prime. Let $G=O(1)$ and $p=2$, or $G$ any compact Lie group with $\dim\mathfrak{g}>0$ and $p$ any prime.
Then the $n$-fold transfer $\Sigma^{n\dim\mathfrak{g}}BG_+^{\wedge n}\to S^0$ induces a homomorphism of Adams spectral sequences which raises the Adams filtration by $n$, i.e.
$$\mathrm{Ext}^{s,t}_{A_p}(\widetilde{H}^*\Sigma^{n\dim\mathfrak{g}}BG_+^{\wedge n},\Z/p)\to \mathrm{Ext}^{s+n,t+n}_{A_p}(\Z/p,\Z/p).$$
\end{prp}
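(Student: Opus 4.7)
The plan is to exploit the factorisation of the $n$-fold transfer constructed in the proof of Lemma \ref{trans-decomposable}: at the prime $p$ the transfer factors as
$$\Sigma^{n\dim\mathfrak{g}}BG_+^{\wedge n}\lra(IS^0)^{\wedge n}\stackrel{i^{\wedge n}}{\lra}(S^0)^{\wedge n}\simeq S^0,$$
where $i:IS^0\to S^0$ is the canonical map in the fibre sequence $IS^0\to S^0\to H\Z_{(p)}$ defining $IS^0$. The aim is to show that each of the $n$ copies of $i$ contributes one unit of Adams filtration.

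First I would verify that $i$ is trivial on mod $p$ cohomology. This is immediate from the long exact sequence in $\F_p$-cohomology induced by the defining fibre sequence: the unit $S^0\to H\Z_{(p)}$ induces an isomorphism on $H^0(-;\F_p)$, while $S^0$ has no cohomology in positive degrees, forcing $i^*=0$. Next, by the K\"unneth theorem for spectra, any map of the form $\mathrm{id}\wedge\cdots\wedge i\wedge\cdots\wedge\mathrm{id}$ induces $i^*$ tensored with identities on $\F_p$-cohomology, and so is again zero on mod $p$ cohomology. Decomposing $i^{\wedge n}$ as the $n$-fold composite in which each step swaps one factor $IS^0$ for $S^0$ therefore presents the entire $n$-fold transfer as a composition of $n$ maps each trivial on mod $p$ cohomology, preceded by the lift $\Sigma^{n\dim\mathfrak{g}}BG_+^{\wedge n}\to(IS^0)^{\wedge n}$ and followed by the equivalence $(S^0)^{\wedge n}\simeq S^0$, neither of which reduces the filtration.

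The conclusion will then follow from the standard principle that a stable map factoring as $n$ composable maps each trivial on mod $p$ cohomology lifts through the $n$-th stage of the Adams resolution of its target, and so induces a map of Adams spectral sequences raising the Adams filtration by $n$ at every page; on $E_2$ this is precisely the claimed homomorphism $\ext^{s,t}_{A_p}(\widetilde H^*\Sigma^{n\dim\mathfrak{g}}BG_+^{\wedge n},\Z/p)\to\ext^{s+n,t+n}_{A_p}(\Z/p,\Z/p)$. The substantive step in this approach is the cohomological triviality of $i$; the smash-power multiplicativity and the translation from \emph{factors through the $n$-th stage of the Adams tower} to a shift on $E_2$ are standard bookkeeping, so I do not anticipate a serious obstacle beyond organising this into a short clean argument.
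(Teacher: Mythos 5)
Your argument is correct, and it takes a genuinely different route from the paper's. The paper factors the $n$-fold transfer as a composite of $n$ intermediate single transfer maps $1\wedge\lambda\wedge 1$ (coming from the chain of subgroups $1<1^{\times(n-1)}\times O(1)<\cdots<O(1)^{\times n}$) and lifts each step one stage up a smashed Adams resolution of the target; you instead reuse the factorisation through $(IS^0)^{\wedge n}$ from Lemma \ref{trans-decomposable}, observe that $i:IS^0\to S^0$ is zero on mod $p$ cohomology, and invoke the standard characterisation of Adams filtration as the number of cohomologically trivial composable factors. Your version is more uniform (it treats $G=O(1)$ and general $G$ by one argument and recycles a lift already constructed), while the paper's version makes the intermediate transfers explicit, which is closer in spirit to Lin's algebraic Kahn--Priddy theorem that it is meant to generalise. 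All the ingredients you use are sound: $i^*=0$ follows from the long exact sequence as you say, the K\"unneth argument applies since $H^*(IS^0;\F_p)$ is of finite type, and a filtration-$n$ map does induce the shift $(s,t)\mapsto(s+n,t+n)$ on $E_2$.

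One small caveat you inherit from Lemma \ref{trans-decomposable}: for $G=O(1)$ the lift $\widetilde{\lambda}$ is constructed only on the reduced summand $\R P\subset\R P_+=\R P\vee S^0$, because the restriction of the transfer to the $S^0$ summand is the degree $2$ map, whose composite with $S^0\to H\Z_{(2)}$ is $2\neq 0$; so the full map $\R P_+\to S^0$ does not literally lift to $IS^0$. This does not affect your conclusion, since the degree $2$ map is itself zero on mod $2$ cohomology and hence of Adams filtration at least $1$, so every wedge summand of $\R P_+^{\wedge n}\to S^0$ is still a smash of $n$ cohomologically trivial maps; but if you route the argument through $(IS^0)^{\wedge n}$ you should either restrict to the reduced summands or replace $IS^0$ by the fibre of $S^0\to H\Z/2$ for this case.
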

\begin{proof}
We deal with the case of $G=O(1)$ and $p=2$ and the other cases are similar. Write $\lambda:\R P_+\to S^0$ for the transfer associated to $\iota:1<O(1)$, or the map of Kahn-Priddy. First, note that the the monomorphism $1<O(1)^{\times n}$ factorises as
$$1<1^{\times n}<1^{\times (n-1)}\times O(1)<1^{\times (n-2)}\times O(1)\times O(1)<\cdots<O(1)^{\times n}$$
so the associated transfer admits a factorisation. Writing $\iota:1\to O(1)$ for the usual embedding, the embedding $1^{\times(n-k)}\times O(1)^{\times k}<1^{\times (n-k-1)}\times O(1)\times O(1)^{\times k}$ is given by $1_{n-k-1}\times\iota\times 1_{O(1)^{\times k}}$ . Therefore, the associated transfer map is given by
$$1\wedge\lambda\wedge 1:B1^{\times(n-k-1)}_+\wedge BO(1)_+\wedge BO(1)^{\times k}_+\to B1^{\times(n-k-1)}_+\wedge B1_+\wedge BO(1)^{\times k}_+.$$

Composition of these transfer maps when $k$ varies, followed by the product $(B1^{\times n})_+=(S^0)^{\wedge n}\to S^0$ is equals to $\lambda^{\wedge n}$ which is compatible with the fact that the transfer associated to $1<O(1)^{\times n}$ is $\lambda^{\wedge n}$. The fact that the restriction of $\lambda$ to the base point provides a stable map $S^0\to S^0$ together with $H^0(\R P;\Z/2)\simeq 0$ implies that $\lambda:\R P\to S^0$ lifts to the first stage of the Adams resolution for $S^0$
$$\xymatrix{
&Y_1\ar[d]\\
\R P\ar[r]^{\lambda}\ar[ru]^{\overline{\lambda}} & S^0\ar[r] & H\Z/2}$$
where we write $Y_i(X)$ for the $i$-th stage of the mod $p$ Adams resolution for $X$, and $Y_i=Y_i(S^0)$. Now, by applying $-\wedge\R P$ to the Adams resolution for $S^0$, noting that the first stage of the Adams resolution for $\R P$, denote by $Y_1(\R P)$ is the fibre of $S^0\wedge \R P\to H\Z/2\wedge\R P$ the map $1\wedge\lambda$ in the following diagram admits an obvious lifting
$$\xymatrix{
& Y_1(\R P)\ar[d]\\
\R P\wedge\R P\ar[ru]^-{\overline{\lambda\wedge 1}}\ar[r]^-{\lambda \wedge 1} & \R P\wedge S^0\ar[r] & H\Z/2\wedge \R P.}$$
Now, the composition $\R P^{\wedge 2}\to Y_1(\R P)\to Y_2$ induces a map of Adams spectral sequences
$$E_2^{s,t}(\R P^{\wedge 2}_+)=\ext^{s,t}_A(\widetilde{H}^*\R P^{\wedge 2}_+,\Z/2)\to \ext_A^{s+2,t+2}(\Z/2,\Z/2)=E_2^{s+2,t+2}.$$
The cases for $n>2$ are similar, one applies $-\wedge\R P$ to the Adams resolution for $\R P^{\wedge (n-1)}$ and plays the game above to get the desired lifting. We leave this to the reader.
\end{proof}


\begin{proof}[Completing proof of Theorem \ref{n-foldtransfer}]
For $n>2$ by the above theorem any element in the image of ${_2\pi_*^s}\Sigma^{n\dim\mathfrak{g}}BG_+^{\wedge n}\to{_2\pi_*^s}$ will be an elements of Adams filtration at least $3$, and also a decomposable element by Lemma \ref{trans-decomposable}. By Theorem \ref{necessary} the must vanish under the Hurewicz homomorphism. Similarly, if $n=2$ and $s>0$ then the image of $2$-fold transfer consists of decomposable elements of Adams filtration at least $3$. Theorem \ref{necessary} the completes the proof. For $n=2$, $s=0$, by the above lemma, the image of the composition
$${_p\pi_*^s}\Sigma^{n\dim\mathfrak{g}}BG_+^{\wedge n}\to {_p\pi_*^s}\simeq{_p\pi_*}Q_0S^0\to H_*(Q_0S^0;\Z/p)$$
falls into $\Z/p\{h(\eta^2),h(\nu^2),h(\sigma^2)\}$. It is then elementary that in the case of $G=O(1)$ and $p=2$, $h(\eta^2)$, $h(\nu^2)$ and $h(\sigma^2)$ are in the image. For instance, let $\widetilde{\eta}\in \pi_1^s\R P_+$ be a pull back of $\eta\in\pi_1^s$ along $\lambda$. It is standard that $\widetilde{\eta}\wedge \widetilde{\eta}\in\pi_2^s\R P^{\wedge 2}_+$ maps to $\eta^2=\eta\wedge\eta$ under $\lambda^{\wedge 2}$. Hence, $h(\eta^2)$ is in the above composition. The other cases are verified similarly.
\end{proof}

Let's conclude this section by noting that the results of this section have applications to the study of bordism group of immersions as discussed in \cite{AsadiEccles-determining}, especially the bordism group of skew-framed immersions of \cite{AkhmetevEccles} which we postpone to a future work.

\section{Non-factorisation of $\theta_j$ elements for $j>3$}
Let $\theta_j\in{_2\pi_{2^{j+1}-2}^s}$ denote Kervaire invariant one element which after \cite{HillHopkinsRavenel} we know exists for $1\leqslant j\leqslant 5$ and possibly for $j=6$, and not for $j>6$. The results of this section, provide applications for our observations on the $n$-fold transfer maps. The following might be well-known, but we record a proof.
\begin{thm}
For $j>3$, the $\theta_j$ elements are not decomposable in ${_2\pi_*^s}$.
\end{thm}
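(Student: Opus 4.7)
The plan is to argue by contradiction, combining Theorem \ref{main0}(ii) with the classical fact that the mod $2$ Hurewicz image of every Kervaire invariant one element is nonzero.

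Suppose for contradiction that $\theta_j = \sum_i f_i g_i$ is a decomposition in ${_2\pi_*^s}$ with each $f_i, g_i$ of positive dimension. Applying $h$ and using additivity gives $h(\theta_j) = \sum_i h(f_i g_i)$. By Theorem \ref{main0}(ii), any nonzero contribution $h(f_i g_i)$ requires $f_i = g_i$ (up to an odd multiple) with $f_i \in \{\eta, \nu, \sigma\}$, in which case $h(f_i g_i) \in \{h(\eta^2), h(\nu^2), h(\sigma^2)\}$. These three nontrivial Hurewicz images of decomposable classes are supported only in dimensions $2$, $6$, and $14$.

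Next I carry out the degree count: for $j>3$ one has $|\theta_j| = 2^{j+1} - 2 \geq 30$, so every product $f_i g_i$ appearing in the assumed decomposition has dimension exceeding $14$, forcing $h(f_i g_i) = 0$ for every $i$. Hence $h(\theta_j) = 0$. This contradicts the classical nonvanishing of $h(\theta_j)$, which underlies the formulation of Conjecture \ref{conjecture} itself and follows from the Dyer--Lashof description of the Hurewicz image of $\theta_j$ established in early work of Madsen, Curtis, and Browder. Therefore $\theta_j$ cannot be written as such a sum of products.

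The argument is essentially a pure degree comparison once Theorem \ref{main0}(ii) is available, so the main potential obstacle is merely verifying the input $h(\theta_j) \neq 0$. Note that, in contrast to Theorem \ref{necessary}, which needs Adams filtration strictly greater than $2$, here the dimension bound $|\theta_j|>14$ suffices to circumvent the filtration restriction, even though $\theta_j$ sits on the $2$-line of the Adams spectral sequence; the hypothesis $j>3$ is precisely what guarantees that $\theta_j$ lies above the three exceptional dimensions $2,6,14$ of the Hopf-squared classes.
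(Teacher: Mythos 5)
Your argument is correct and is essentially the paper's own proof: both combine Madsen's theorem that $h(\theta_j)\neq 0$ with Theorem \ref{main0}(ii), which confines the Hurewicz image of decomposables to the dimensions $2$, $6$, $14$ of $h(\eta^2),h(\nu^2),h(\sigma^2)$, and then note that $2^{j+1}-2>14$ for $j>3$. The only cosmetic difference is that you phrase the contradiction via an explicit sum $\sum_i f_ig_i$ while the paper states it more tersely, so there is nothing further to add.
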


\begin{proof}
According to Madsen \cite[Theorem 7.3]{Madsenthesis} if there exists a Kervaire invariant one element $\theta_j\in{_2\pi_{2^{j+1}-2}^s}$ then it maps nontrivially under the Hurewicz homomorphism
$${_2\pi_{2^{j+1}-2}^s}\simeq{_2\pi_{2^{j+1}-2}}Q_0S^0\to H_{2^{j+1}-2}(Q_0S^0;\Z/2).$$
Suppose $\theta_j$ is a decomposable with $h(\theta_j)\neq 0$. By Theorem \ref{main0} it must live either in one of the dimensions $2,6,14$. This is not possible for $j>3$. This completes the proof.
\end{proof}

Minami \cite{Minami-Kervaire} has shown that for $j>3$, the $\theta_j$ elements do not factorise through the double transfer $\R P^{\wedge 2}_+\to S^0$. We reprove and generalise this in the following way.

\begin{thm}
For $j>3$ and $n>1$, the Kervaire invariant one elements $\theta_j\in{_2\pi_{2^{j+1}-2}^s}$ do not factor through the $n$-fold transfer $\Sigma^{n\dim\mathfrak{g}}BG^{\wedge n}_+\to S^0$ where $G=O(1)$ or $G$ any compact Lie group with $\dim\mathfrak{g}>0$.
\end{thm}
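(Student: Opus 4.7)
The plan is to combine Madsen's non-vanishing \cite[Theorem 7.3]{Madsenthesis}, which asserts $h(\theta_j)\neq 0$ in $H_{2^{j+1}-2}(Q_0S^0;\Z/2)$, with Theorem \ref{n-foldtransfer} to reach a contradiction. First I would assume, for contradiction, that $\theta_j$ lifts to some $\widetilde{\theta}_j\in{_2\pi_{2^{j+1}-2}^s}\Sigma^{n\dim\mathfrak{g}}BG^{\wedge n}_+$ along the $n$-fold transfer. Naturality of the Hurewicz map then identifies $h(\theta_j)$ with the image of $\widetilde{\theta}_j$ under the composition
$${_2\pi_*^s}\Sigma^{n\dim\mathfrak{g}}BG^{\wedge n}_+\lra{_2\pi_*}Q_0S^0\stackrel{h}{\lra}H_*(Q_0S^0;\Z/2),$$
and Madsen's theorem then guarantees this image is nonzero.

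Next I would apply Theorem \ref{n-foldtransfer} to constrain where this nonzero image can lie. For $n>2$, part (i) says the above composition is identically zero, giving an immediate contradiction. For $n=2$, I would split according to the Adams filtration $s$ of the chosen lift $\widetilde{\theta}_j$ in the source. If $s\geq 1$, part (ii) forces $h(\theta_j)=0$, contradicting Madsen. If $s=0$, part (ii) puts $h(\theta_j)$ inside $\Z/2\{h(\eta^2),h(\nu^2),h(\sigma^2)\}$; but these three classes live in dimensions $2$, $6$ and $14$, while $|\theta_j|=2^{j+1}-2\geq 30$ for $j>3$, so the degree-$(2^{j+1}-2)$ component of this subgroup is trivial. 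Either way $h(\theta_j)=0$, the desired contradiction.

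There is no genuinely hard step once Theorem \ref{n-foldtransfer} is in place: the argument is essentially a dimension and filtration count matched against Madsen's Hurewicz non-vanishing, in the same spirit as the preceding theorem that ruled out $\theta_j$ being decomposable for $j>3$. The only point worth verifying is that the dichotomy $s=0$ versus $s\geq 1$ exhausts all possibilities for the lift, which is automatic since every element of ${_2\pi_*^s}\Sigma^{n\dim\mathfrak{g}}BG^{\wedge n}_+$ has a well-defined Adams filtration. Specialising to $n=2$ and $G=O(1)$ then recovers Minami's non-factorisation through the double transfer $\R P_+^{\wedge 2}\to S^0$, and the statement for general compact Lie groups and all $n>1$ comes out of the same argument with no additional input.
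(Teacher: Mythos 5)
Your argument is correct and coincides with the paper's own second proof of this theorem (the paper remarks that the result ``also follows from Theorem \ref{n-foldtransfer}'': if $\theta_j$ exists then $h(\theta_j)\neq 0$ by Madsen, while the transfer composition forces it to vanish), the primary proof in the paper instead going through Lemma \ref{trans-decomposable} and the preceding indecomposability theorem. Your explicit handling of the $n=2$ case --- splitting on the Adams filtration of the lift and disposing of the filtration-$0$ case by the dimension count $2^{j+1}-2\geq 30>14$ --- is exactly the detail the paper's one-line sketch glosses over, and it is the right way to close that case.
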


\begin{proof}
If $\theta_j$ factors throughout the $n$-fold transfer, then by the discussion of Section \ref{Hurewicz-transfer} it has to be decomposable element in ${_2\pi_{2^{j+1}-2}^s}$ which we know is not possible for $j>3$. This completes the proof.
\end{proof}

The above theorem also follows form Theorem \ref{n-foldtransfer}. If $\theta_j$ exists then $h(\theta_j)\neq 0$ whereas by Theorem \ref{n-foldtransfer} it must map trivially under $h$. This gives another short proof of the above theorem.

\end{document}